\newcommand{\ZZ}{\mathbb{Z}}
\newcommand{\Aut}{\operatorname{Aut}}  
\newcommand{\id}{\operatorname{id}}
\newcommand{\e}{\epsilon}
\newcommand{\Gr}{\operatorname{Gr}}
\newcommand{\GL}{\operatorname{GL}}
\newcommand{\AlgGr}{\operatorname{AlgGr}} 
\newcommand{\msa}{\mathfrak{msa}}
\newcommand{\Iso}{\operatorname{Iso}} 
\newcommand{\Inn}{\operatorname{Inn}}
\newtheorem{theorem}{Theorem}[section]
\newtheorem{proposition}[theorem]{Proposition}
\newtheorem{lemma}[theorem]{Lemma}
\newtheorem{corollary}[theorem]{Corollary}
\newtheorem{example}[theorem]{Example}
\theoremstyle{remark}
\renewenvironment{proof}{{\noindent\bf Proof.}}{\hfill $\Box$\par\vskip3mm} 
\newenvironment{proofEven}{{\noindent\bf Proof (Even Case).}}{\hfill $\Box$\par\vskip3mm}  
\newenvironment{proofOdd}{{\noindent\bf Proof (Odd Case).}}{\hfill $\Box$\par\vskip3mm}
\begin{document}    

\title[Isoclasses Determined by Automorphisms]{On Isoclasses of Maximal Subalgebras Determined by Automorphisms} 
\author{Alex Sistko} 
\thanks{{2010 \textit{Mathematics Subject Classification}. Primary 16S99; Secondary 16W20, 16Z99, 05E15, 05C60}\\
{\bf Keywords} finite-dimensional algebra, maximal subalgebra, subalgebra variety, automorphism group, presentation, isoclass, type A, Dynkin quiver}
\address{Department of Mathematics, University of Iowa, 
Iowa City, IA 52242}
\email{alexander-sistko@uiowa.edu}
%\urladdr{www.math.sc.edu/$\sim$howard} % Delete if not wanted. 

\maketitle{}
\date{}

\begin{abstract}

 Let $k$ be an algebraically-closed field, and let $B = kQ/I$ be a basic, finite-dimensional associative $k$-algebra with $n := \dim_kB < \infty$. Previous work shows that the collection of maximal subalgebras of $B$ carries the structure of a projective variety, denoted by $\msa (Q)$, which only depends on the underlying quiver $Q$ of $B$. The automorphism group $\Aut_k(B)$ acts regularly on $\msa (Q)$. Since $\msa (Q)$ does not depend on the admissible ideal $I$, it is not necessarily easy to tell when two points of $\msa (Q)$ actually correspond to isomorphic subalgebras of $B$. One way to gain insight into this problem is to study $\Aut_k(B)$-orbits of $\msa (Q)$, and attempt to understand how isoclasses of maximal subalgebras decompose as unions of $\Aut_k(B)$-orbits. This paper investigates the problem for $B = kQ$, where $Q$ is a type $\mathbb{A}$ Dynkin quiver. We show that for such $B$, two maximal subalgebras with connected Ext quivers are isomorphic if and only if they lie in the same $\Aut_k(B)$-orbit of $\msa (Q)$.  %Finally, we show that if $B$ is the path algebra of a type-$\mathbb{A}$ Dynkin quiver, then the $\Aut_k(B)$-orbits are precisely the isomorphism classes of maximal subalgebras. 
\end{abstract}  

 %FIGURE OUT HOW TO FIX THIS

\section{Introduction}\label{s.intro}  

\noindent Let $B$ be a finite-dimensional, unital, associative algebra over an algebraically-closed field $k$. Then the celebrated \emph{Wedderburn-Malcev Theorem} states that there exists a $k$-subalgebra $B_0 \subset B$ such that $B_0 \cong B/J(B)$ and $B = B_0 \oplus J(B)$, where $J(B)$ denotes the Jacobson radical of $B$. Furthermore, for any subalgebra $B_0' \subset B$ isomorphic to $B_0$, there exists a $x \in J(B)$ such that $(1+x)B_0(1+x)^{-1} = B_0'$. For more details, see for instance \cite{Farn} or Theorem 11.6 of \cite{Pi}. Of course, the collection of all $k$-algebra automorphisms of $B$, which we denote by $\Aut_k(B)$, acts on the set of subalgebras of $B$. For any $x \in J(B)$, the map $y \mapsto (1+x)y(1+x)^{-1}$ is an automorphism of $B$. So another way to state the second half of the Wedderburn-Malcev Theorem is to say that the \emph{isoclass} of $B_0$ in $B$, i.e. the set of all subalgebras of $B$ isomorphic to $B_0$, is a single $\Aut_k(B)$-orbit. 

Unsurprisingly, this statement is false for general subalgebras $A$ of $B$. Nevertheless, recent investigations into maximal subalgebras of finite-dimensional algebras suggest that examples of such $A$ are not necessarily rare \cite{IS}. It is therefore natural to ask what conditions we can impose on $A$ to ensure that its isoclass in $B$ is an orbit of $\Aut_k(B)$. More generally, one can ask whether there is any way to classify the $\Aut_k(B)$-orbits of subalgebras of $B$, and relate them to isoclasses of subalgebras. This is one source of inspiration for the current paper.

Another source of inspiration comes from the study of varieties of subalgebras, as the author has done recently in \cite{Sis}. For any $1 \le m \le \dim_kB$, the collection of all $m$-dimensional subalgebras of $B$ carries the structure of a projective $k$-variety, which we call $\AlgGr_m(B)$. The linear algebraic group $\Aut_k(B)$ acts regularly on this variety. Neither $m$ or $B$ are enough to specify $\AlgGr_m(B)$ up to equivalence of varieties: in fact, if $B = kQ/I$ is a basic algebra and $m = \dim_kB - 1$, then $\AlgGr_m(B)$ \emph{only} depends on $Q$. So it will be difficult in general to choose an admissible ideal $I$ of $kQ$, and determine whether two points of $\AlgGr_{\dim_kkQ/I-1}(kQ/I)$ actually represent isomorphic subalgebras of $kQ/I$. Thankfully, the automorphism group $\Aut_k(kQ/I)$ \emph{is} sensitive to the data contained in $I$. So, provided that one can impose reasonable conditions on the relationship between orbits and isoclasses, one can expect that orbits under this group action will yield significant information on isoclasses of subalgebras. In \cite{Sis} we discuss one possible version of ``reasonable conditions,'' where the variety is a finite union of orbits.

The purpose of this paper is to carry out this program as far as possible for a suitable ``test class'' of algebras. For us, these will be path algebras of type $\mathbb{A}$ Dynkin quivers and their maximal subalgebras. As it turns out, many maximal subalgebras of such algebras will have isoclasses that are single $\Aut_k(B)$-orbits. However, we will show that even for such a nicely-behaved class, isoclasses differ from orbits in at least some circumstances.

This paper is organized as follows. In Section \ref{s.bg} we review the basic notions associated to path algebras and their automorphisms. We also discuss the major results from \cite{IS}, \cite{Sis} which will be used to prove our main result. In Section \ref{s.pres}, we discuss the problem of presenting maximal subalgebras of basic algebras. In particular, Propositions \ref{p.seppres} and \ref{c.pathalg} provide explicit presentations for maximal subalgebras of hereditary algebras. The results of this section will be used in Section \ref{s.main}, where we prove the main result of this article: 

\begin{theorem}\label{t.main}
Let $k$ be an algebraically closed field, $Q$ a type $\mathbb{A}$ Dynkin quiver, and $B = kQ$. Suppose that $A, A' \in \msa (Q)$ have connected Ext quivers. Then $A$ and $A'$ lie in the same $\Aut_k(B)$-orbit if and only if $A \cong A'$ as $k$-algebras.
\end{theorem}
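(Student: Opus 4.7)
The forward implication is trivial: any $\sigma \in \Aut_k(B)$ restricts to a $k$-algebra isomorphism $A \to \sigma(A)$. For the converse, my plan is to use the explicit descriptions of maximal subalgebras of hereditary algebras given in Propositions~\ref{p.seppres} and~\ref{c.pathalg} to tag each $A \in \msa(Q)$ with combinatorial/algebraic data, and then to show that connectedness of the Ext quiver together with a $k$-algebra isomorphism $A \cong A'$ forces these data to match up to the action of $\Aut_k(B)$.

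Following the classification, every $A \in \msa(Q)$ is of one of two types. A \emph{split-type} maximal subalgebra is determined by a codimension-$1$ sub-bimodule of $\rad(B)$ containing $\rad^2(B)$; for $B = kQ$ with $Q$ a type-$\mathbb{A}$ Dynkin quiver (hence having at most one arrow between any two vertices), this boils down to the choice of a single arrow $\alpha$ of $Q$. A \emph{separable-type} maximal subalgebra is determined by an unordered pair $\{i,j\}$ of vertices with $e_iBe_j = e_jBe_i = 0$, together with a choice of merged idempotent. In either case the Ext quiver of $A$ is readily computed from this data: in the split case, one deletes $\alpha$ from $Q$ and adjoins, for each path of $Q$ factoring nontrivially through $\alpha$, a new arrow; in the separable case, one collapses the two vertices to one.

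Next I would use these explicit Ext quivers to show that, when the Ext quiver is connected, the combinatorial data underlying $A$ is recoverable up to a graph automorphism of $Q$ from the abstract isomorphism class of $A$. The Dynkin type $\mathbb{A}$ hypothesis is essential here: $Q$ is a tree with at most one nontrivial orientation-preserving symmetry, and this geometric rigidity, combined with connectedness, pins down the location of $\alpha$ (or of the pair $\{i,j\}$) up to that single possible flip. Given an abstract isomorphism $A \cong A'$, I would therefore deduce that the data of $A'$ agrees with that of $A$ up to a graph automorphism of $Q$ and a scalar parameter.

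Finally I would exhibit $\sigma \in \Aut_k(B)$ with $\sigma(A) = A'$ by combining the three types of automorphisms available for the path algebra of a tree: a graph automorphism of $Q$ to handle the symmetry; a diagonal automorphism rescaling each arrow by an element of $k^\times$ to absorb the scalar parameter; and, if needed, an inner automorphism to correct the Wedderburn--Malcev ambiguity in the chosen splitting $B = B_0 \oplus \rad(B)$. The main obstacle, I expect, lies in the separable case: one must check that a single diagonal automorphism simultaneously reconciles the scalar data attached to the several arrows incident to $i$ and $j$, and connectedness of the Ext quiver is precisely what makes such a simultaneous reconciliation possible---its failure in disconnected cases is presumably where the theorem as stated would break down.
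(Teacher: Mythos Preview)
Your high-level strategy---reduce to standard representatives via Theorem~\ref{t.basic}, extract combinatorial data (an arrow for split type, a vertex pair for separable type), and show that an abstract isomorphism forces this data to agree up to $\Aut(Q)$---is the paper's strategy as well. However, you have misidentified where the work lies, and there are some inaccuracies in your setup.

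First, two corrections. In the separable case there is no requirement that $e_iBe_j = e_jBe_i = 0$; any two-element subset of $Q_0$ yields a maximal subalgebra $A(v_i+v_j)$. More importantly, for a type~$\mathbb{A}$ Dynkin quiver there is no scalar parameter anywhere: adjacent vertices are joined by a single arrow, so in the split case the codimension-$1$ subspace $U$ is necessarily $\{0\}$, and in the separable case $A(v_i+v_j)$ contains all of $J(B)$ and hence carries no continuous data. Diagonal rescalings therefore play no role, and your anticipated ``main obstacle'' of simultaneously reconciling scalar data at the merged vertex does not arise.

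The actual content of the proof is entirely in your middle step, which you dispatch in one sentence by appeal to ``geometric rigidity.'' Concretely, one must show that if $A_i \cong A_j$ (split type) or $A_{i,j} \cong A_{i',j'}$ (separable type) as abstract $k$-algebras, then the indices are related by the unique nontrivial graph automorphism of the underlying $\mathbb{A}_n$ diagram, and moreover that this automorphism is actually a quiver automorphism of $Q$. This is not automatic. In the split case the Ext quiver of $A_i$ (Figure~\ref{f.quiverform}) typically has a trivalent vertex with three branches, and distinct values of $i$ can produce Ext quivers with identical branch-length multisets; one must then compare arrow orientations along the branches via parity arguments on the binary word $\underline{w}$ encoding $Q$, either deriving a contradiction or concluding $\underline{w}^* = \underline{w}$ so that $\Aut(Q) = C_2$ (Lemmas~\ref{l.splitnonher}--\ref{l.splittri}). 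The separable case is harder still: showing that $A_{i,j} \cong A_{-i,-j}$ forces $\underline{w}^* = \underline{w}$ requires a delicate word-combinatorics argument (Lemma~\ref{l.evenodd}) splitting on the parity of $n$ and on the relative lengths of several subwords of $\underline{w}$. Your proposal gives no indication of how to carry out these arguments, and they constitute essentially the entire proof.
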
 

\noindent This is essentially done by showing that the Ext quivers of $A$ and $A'$ are nonisomorphic whenever they lie in different $\Aut_k(B)$-orbits. We note that this theorem can be rephrased as follows: if the underlying graph of $Q$ is an oriented tree with maximum degree $2$ and $B = kQ$, then the isoclass of a connected maximal subalgebra of $B$ coincides with its $\Aut_k(B)$-orbit. The author does not currently know whether similar statements hold for all trees with maximum degree $3$ or higher. We end on an example which shows that if the Ext quiver of $A$ is not connected, then its isoclass can differ from its $\Aut_k(B)$-orbit.

\section{Background}\label{s.bg}  

Unless otherwise stated, $k$ will denote an algebraically-closed field. All algebras are unital, associative, finite-dimensional $k$-algebras, and our terminology essentially comes from \cite{ASS}. Let $Q$ be a finite quiver with vertex set $Q_0$, arrow set $Q_1$, and source (resp. target) function $s$ (resp. $t$) $: Q_1\rightarrow Q_0$. The \emph{underlying graph} of $Q$ is obtained by forgetting the orientations on the arrows. Let $kQ$ denote the path algebra of $Q$, and let $J(Q)$ denote the two-sided ideal in $kQ$ generated by $Q_1$. For $n \geq 2$, we let $T_nQ := kQ/J(Q)^n$ denote the \emph{$n^{th}$ truncated path algebra} associated to $Q$. By a slight abuse of notation, for any $u, v \in Q_0$ we let $uQ_1v$ denote the set of arrows in $Q$ with source $u$ and target $v$, and we let $ukQ_1v$ denote their $k$-span inside $kQ$. Note that if $uQ_1v = \emptyset$, then $ukQ_1v = \{ 0 \}$ and $\GL(ukQ_1v)$ is the trivial group. Similar to \cite{GS2}, we define $V^2(Q) = \{ (u,v) \in Q_0\times Q_0\mid uQ_1v \neq \emptyset\}$. A \emph{basic algebra} is an algebra of the form $B = kQ/I$, where $I$ is an \emph{admissible ideal} of $kQ$, i.e. an ideal satisfying $J(Q)^2 \supset I \supset J(Q)^{\ell}$ for some $\ell \geq 2$. Note that $B = kQ_0 \oplus J(B) = kQ_0 \oplus J(Q)/I$, and that $kQ_0 \cong B/J(B) \cong k^{|Q_0|}$. %In fact, the Wedderburn-Malcev theorem tells us that this decomposition is unique in the following sense: for all subalgebras $B_0 \subset B$ isomorphic to $kQ_0$, there is an $x \in J(B)$ such that $(1+x)B_0(1+x)^{-1} = kQ_0$. For details, see \cite{Pi}.

% For $n \geq 2$, we define $T_nQ := kQ/J(Q)^n$, the \emph{$n^{th}$ truncated quiver algebra} associated to $Q$. From here on out, we put total orderings on $Q_0$ and $Q_1$: write $Q_0 = \{ v_0 < \ldots < v_{n_0}\}$ for some $n_0 \in \ZZ_{\geq 0}$ and $Q_1 = \{ \alpha_1<\ldots < \alpha_{n_1} \}$ for some $n_1 \in \NN$. Note that for the sake of stating the results with a minimum of caveats, the author assumes $Q_1 \neq \emptyset$ for our generic quiver $Q$; the theorems are still valid, albeit trivial, for the $Q_1 = \emptyset$ case. We also assume that the total ordering on $Q_1$ satisfies the following condition: if $uQ_1v \neq \emptyset$, then the arrows in $uQ_1v$ form an interval within the total ordering of $Q_1$ (i.e. there exist indices $1 \le i_{uv}  \le n_1$ and $n_{uv} \geq 0$ such that $ukQ_1v = \{ \alpha_{i_{uv}}, \alpha_{i_{uv}+1},\ldots , \alpha_{i_{uv}+n_{uv}} \}$). %If $X$ and $Y$ are two totally ordered sets with the same cardinality $c$, then as is convenient, we treat a bijection $X \rightarrow Y$ as an element in the symmetric group $S_c$. WHAT ABOUT MAXIMAL SUBALGEBRAS AND CONJUGATE SUBALGEBRAS?

We let $\Aut_k(B)$ denote the group of all $k$-algebra automorphisms of $B$. It is a Zariski-closed subgroup of $\GL(B)$, and hence a linear affine algebraic group. Our notation for subgroups of $\Aut_k(B)$ is borrowed from the notation in \cite{Poll}, \cite{GS1}, \cite{GS2}. If $G$ is a subgroup of $\Aut_k(B)$, we say that two subalgebras $A$ and $A'$ are \emph{$G$-conjugate} (in $B$) if there exists a $\phi \in G$ such that $A' = \phi (A)$. For a unit $u \in B^{\times}$, we let $\iota_u$ denote the corresponding \emph{inner automorphism}, i.e. the map $\iota_u(x) = uxu^{-1}$ for all $x \in B$. We let $\Inn (B)$ denote the group of all inner automorphisms, and $\Inn^*(B) = \{ \iota_{1+x} \mid x \in J(B)  \}$ denote the group of \emph{unipotent inner automorphisms}. If $B = kQ/I$ is basic, we let $\hat{H}_B = \{ \phi \in \Aut_k(B) \mid \phi (Q_0) = Q_0 \}$ and $H_B = \{ \phi \in \Aut_k(B) \mid \phi\mid_{Q_0} = \id_{Q_0} \}$. By Theorem 10.3.6 of \cite{HGK}, $\Inn(B)$ acts transitively on complete collections of primitive orthogonal idempotents. Since inner automorphisms induced by units of the form $\sum_{v \in Q_0}{\lambda_vv}$ (where $\lambda_v \in k^{\times}$ for each $v$) fix vertices, $\Inn^*(B)$ is also transitive on this set and we have a decomposition $\Aut_k(B) = \Inn^*(B) \cdot \hat{H}_B = \hat{H}_B\cdot \Inn^*(B)$. If the underlying graph of $Q$ is a tree, then $\Aut (Q)$ can be considered a subgroup of $\Aut_k(T_nQ)$ for any $n$, and it is easy to see that we have a decomposition $\hat{H}_{T_nQ} = \Aut (Q) \cdot H_{T_nQ} = H_{T_nQ} \cdot \Aut (Q)$. Since $J(Q)^n = 0$ for large powers $n$, this statement includes the fact that $\Aut(Q)$ is a subgroup of $\Aut_k(kQ)$.

% For a quiver $Q$, we define $S_Q$ to be the permutations $\sigma$ of $Q_0$ satisfying $|\sigma (u)Q_1\sigma (v)| = |uQ_1v|$ for all $u,v \in Q_0$. This is a quotient of $\Aut(Q)$, and with our ordering of $Q_1$, it acts as a subgroup of $\Aut_k(T_nQ)$ by defining it on arrows as follows: for $\sigma \in S_Q$, and $(u,v) \in V^2(Q)$ with $uQ_1v = \{ \alpha_i,\ldots \alpha_{i+d} \}$ and $\sigma (u) Q_1\sigma (v) = \{ \alpha_j , \ldots , \alpha_{j+d}\}$, $\sigma (\alpha_{i+\ell}) = \alpha_{j+\ell}$.    

%Most of the constructions outlined in the next section work for arbitrary $B$, but basic algebras will be the primary focus of this paper. There is a simple reason for this: subalgebras of basic algebras are particularly well-behaved.  For instance, maximal subalgebras of basic algebras can be classified explicitly up to $\Inn^*(B)$-conjugation.

 In Theorem 4.1 of \cite{IS}, the author and M. C. Iovanov proved the following classification Theorem for maximal subalgebras of basic algebras: 

\begin{theorem}\label{t.basic}
Let $B = kQ/I$ be a basic algebra over an algebraically-closed field $k$. Let $A \subset B$ be a maximal subalgebra. Consider the following two classes of maximal subalgebras of $B$: 
\item For a two-element subset $\{ u,v \} \subset Q_0$, we define  
\begin{equation*}
A(u+v) := k(u+v)\oplus \left( \bigoplus_{w \in Q_0\setminus\{ u,v\}}{kw} \right) \oplus J(B). 
\end{equation*}
\item For an element $(u,v) \in V^2(Q)$ and a codimension-$1$ subspace $U \le ukQ_1v$, we define 
\begin{equation*} 
A(u,v,U) := kQ_0 \oplus U \oplus \left( \bigoplus_{(w,y) \in Q_0^2\setminus\{(u,v)\}}{wkQ_1y} \right) \oplus J(B)^2. 
\end{equation*}  

\noindent Then there exists a unipotent inner automorphism $\iota_{1+x} \in \Inn^*(B)$ such that either $\iota_{1+x}(A) = A(u+v)$ or $\iota_{1+x}(A) = A(u,v,U)$, for some appropriate choice of $u$, $v$, and possibly $U$.
\end{theorem}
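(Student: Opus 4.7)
The plan is to split on whether $J(B)\subseteq A$, aiming directly at the two normal forms.

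\textbf{Case 1: $J(B)\subseteq A$.} Then $A/J(B)$ is a maximal subalgebra of $B/J(B)\cong k^{|Q_0|}$. Every subalgebra of $k^n$ containing the unit is spanned by indicator functions of a partition of $\{1,\dots,n\}$, so a maximal one has codimension one and corresponds to a partition with exactly one block of size two. Translating back to $B/J(B)\cong kQ_0$ and using $J(B)\subseteq A$ to lift, this forces $A = A(u+v)$ for some $\{u,v\}\subseteq Q_0$, and we may take $\iota_{1+x}=\id$.

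\textbf{Case 2: $J(B)\not\subseteq A$.} Here maximality gives $A + J(B) = B$, so the surjection $A\to B/J(B)$ has kernel $A\cap J(B) = J(A)$ and $\dim A/J(A) = |Q_0|$. Apply Wedderburn-Malcev to $A$ to obtain a semisimple complement $A'\subseteq A$ whose primitive idempotents form a complete system of primitive orthogonal idempotents of $B$: orthogonality and summing to $1$ are automatic, and each such $e$ is primitive in $B$ because $eBe/eJ(B)e$ embeds into $\bar e(B/J(B))\bar e\cong k$, using that the composition $A'\to B/J(B)$ is an isomorphism (so $\bar e$ is already primitive in $B/J(B)$). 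By Theorem~10.3.6 of \cite{HGK} together with the unipotent refinement recalled in the paragraph preceding the theorem, some $\iota_{1+x}\in\Inn^*(B)$ sends these idempotents to the vertex idempotents. After replacing $A$ by $\iota_{1+x}(A)$ we may assume $kQ_0\subseteq A$, and write $A = kQ_0\oplus M$ with $M = A\cap J(B)$ a $kQ_0$-sub-bimodule of $J(B)$ closed under multiplication.

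The key technical step is to prove $J(B)^2\subseteq M$. The sum $M + J(B)^2$ is again a multiplicatively closed $kQ_0$-sub-bimodule of $J(B)$ (any product lands in $J(B)^2$), so $kQ_0\oplus(M + J(B)^2)$ is a subalgebra of $B$ containing $A$. Maximality forces this to equal either $A$ --- giving the desired $J(B)^2\subseteq M$ --- or $B$, in which case $M + J(B)^2 = J(B)$. I rule out the latter by induction: if $M + J(B)^n = J(B)$ for some $n\ge 2$, choose for each arrow $\alpha$ a lift $m_\alpha\in M$ with $m_\alpha\equiv\alpha\pmod{J(B)^n}$; then for any length-$n$ path $\alpha_1\cdots\alpha_n$ in $Q$, the product $m_{\alpha_1}\cdots m_{\alpha_n}$ lies in $M$ and agrees with $\alpha_1\cdots\alpha_n$ modulo $J(B)^{2n-1}\subseteq J(B)^{n+1}$. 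Such paths span $J(B)^n/J(B)^{n+1}$, so $J(B)^n\subseteq M + J(B)^{n+1}$, which combined with the inductive hypothesis yields $M + J(B)^{n+1} = J(B)$. Nilpotence of $J(B)$ eventually forces $M = J(B)$, contradicting $A\ne B$.

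Once $J(B)^2\subseteq M$, the quotient $M/J(B)^2$ is a $kQ_0$-sub-bimodule of $J(B)/J(B)^2\cong\bigoplus_{(u,v)}ukQ_1v$, so decomposes as $\bigoplus_{(u,v)}M_{u,v}$ with $M_{u,v}\subseteq ukQ_1v$ an arbitrary subspace --- multiplicative closure at this truncation is automatic, since any product already lies in $J(B)^2\subseteq M$. Maximality of $A$ is equivalent to maximality of $M/J(B)^2$ as a proper sub-bimodule of this semisimple $kQ_0$-bimodule, which forces exactly one $(u,v)\in V^2(Q)$ to have $M_{u,v} = U$ of codimension one in $ukQ_1v$ and $M_{w,y} = wkQ_1y$ for all other pairs; this identifies $\iota_{1+x}(A)$ with $A(u,v,U)$. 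The anticipated main obstacle is the induction excluding $M + J(B)^2 = J(B)$; the remaining ingredients --- lifting idempotents, the commutative semisimple case, and classifying maximal proper sub-bimodules of $\bigoplus ukQ_1v$ --- are routine.
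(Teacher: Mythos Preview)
The paper does not actually prove this theorem: it is quoted as Theorem~4.1 of \cite{IS} and used as a black box throughout. So there is no ``paper's own proof'' to compare against here.

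That said, your argument is sound and is essentially the natural proof. The dichotomy on $J(B)\subseteq A$ is the right starting point, the Wedderburn--Malcev / idempotent-lifting step in Case~2 is exactly how one normalizes to $kQ_0\subseteq A$, and your induction ruling out $M+J(B)^2=J(B)$ is correct (the error terms in $m_{\alpha_1}\cdots m_{\alpha_n}$ land in $J(B)^{2n-1}\subseteq J(B)^{n+1}$, and images of length-$n$ paths span $J(B)^n/J(B)^{n+1}$ since $I\subseteq J(Q)^2$). Once $J(B)^2\subseteq M$, the reduction to a maximal proper $kQ_0$-sub-bimodule of the semisimple bimodule $J(B)/J(B)^2$ is immediate and yields the split-type normal form. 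One small cosmetic point: in Case~1 you could say explicitly that a subalgebra of $k^n$ is reduced, hence a product of copies of $k$ since $k$ is algebraically closed, which is what forces the partition description; and in Case~2 you might note that the inner automorphism carrying your idempotent set to $Q_0$ sends $A'$ onto $kQ_0$ as a subalgebra (not merely as a set of idempotents), which is what you use. Neither is a gap.
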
  

\noindent As in \cite{IS}, if $A$ is $\Inn^*(B)$-conjugate to a subalgebra of the form $A(u+v)$, then we say that $A$ is of \emph{separable type}. If $A$ is $\Inn^*(B)$-conjugate to an algebra of the form $A(u,v,U)$, then we say that $A$ is of \emph{split type}. As an immediate consequence of Theorem \ref{t.basic}, all maximal subalgebras of a basic algebra are basic, have codimension $1$, and contain the radical square. In fact, we have the following easy corollary, which first appeared in \cite{Sis}: 

 \begin{corollary}\label{c.inherit}
 Let $B$ be a basic $k$-algebra of dimension $n$, and let $A \subset B$ be a subalgebra. Then $A$ satisfies the following: 
\begin{enumerate} 
\item $A$ is also a basic algebra.
\item If $A$ is a maximal subalgebra, then $\dim_kA = n-1$.
\item If $A$ is a maximal subalgebra, then $J(A)$ is a $B$-subbimodule of $J(B)$, $J(A) = A \cap J(B)$, and $J(B)^2 \subset J(A)$.
\item More generally, if $m = \dim_kA$, then $J(B)^{2(n-m)} \subset A$. 
\end{enumerate} 
\end{corollary}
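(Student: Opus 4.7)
My plan is to prove the four statements sequentially, with Theorem \ref{t.basic} doing most of the heavy lifting. First I would prove (1) by restricting the canonical projection $\pi\colon B \to B/J(B) \cong k^{|Q_0|}$ to $A$. Its image is a unital subalgebra of a commutative semisimple $k$-algebra, hence itself a finite product of copies of $k$. Because $A \cap J(B)$ is a nilpotent ideal of $A$ and the quotient $A/(A \cap J(B))$ is already semisimple, I can sandwich to conclude $J(A) = A \cap J(B)$ and that $A/J(A)$ is a product of copies of $k$, i.e.\ $A$ is basic. Part (2) then follows at once: Theorem \ref{t.basic} places a maximal $A$ in the $\Inn^*(B)$-orbit of either $A(u+v)$ or $A(u,v,U)$, both of codimension one, and conjugation preserves $k$-dimension.

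For (3), I would use that both ``being a $B$-subbimodule'' and the ideals $J(B)^k$ are preserved by $k$-algebra automorphisms of $B$, so it suffices to verify the statement for the two canonical forms. For $A(u+v)$, one has $J(A(u+v)) = J(B)$ and every assertion is trivial. For $A(u,v,U)$, part (1) identifies the radical with the explicit direct sum from Theorem \ref{t.basic}; a case-by-case check of left and right multiplication by the idempotents $w \in Q_0$ (using that $U$ lives in the single block $u k Q_1 v$) and by elements of $J(B)$ (which always land in $J(B)^2$) shows each summand is stable, confirming the $B$-subbimodule property. The inclusion $J(B)^2 \subseteq J(A(u,v,U))$ is built into the definition of the canonical form.

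For (4), I would induct on $n-m$ along a maximal flag $A = A_{n-m} \subsetneq A_{n-m-1} \subsetneq \cdots \subsetneq A_0 = B$, where each $A_{i+1}$ is a maximal subalgebra of $A_i$ obtained by repeatedly enlarging $A$ one dimension at a time (applying (1) inside each intermediate algebra so that (3) is available at every step). At the $i$-th step, (3) supplies both $J(A_i)^2 \subseteq J(A_{i+1})$ and the $A_i$-subbimodule structure of $J(A_{i+1})$ inside $J(A_i)$. Chaining these containments down from the base case $J(B)^2 \subseteq J(A_1)$ then drives a power of $J(B)$ into $A$.

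I expect the hardest part to be extracting the precise linear exponent $2(n-m)$ in (4): iterating the squaring relation $J(A_i)^2 \subseteq J(A_{i+1})$ on its own produces only the exponential bound $J(B)^{2^{n-m}} \subseteq A$, which is strictly weaker than claimed. To sharpen this to a linear bound, I would have to exploit the $A_i$-subbimodule structure from (3) so that each new codimension-one step costs only two additional powers of $J(B)$ rather than doubling the exponent, tracking how $J(A_{i+1})$ absorbs products of elements from the strictly larger algebra $A_i$. Carrying out this bookkeeping carefully is the genuine technical heart of the corollary.
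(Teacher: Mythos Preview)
The paper does not actually prove this corollary; it merely states it and cites \cite{Sis}. So there is no ``paper's own proof'' to compare against, and I will simply assess your plan on its merits.

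Your arguments for (1)--(3) are correct and natural. In particular, your observation that $J(A) = A\cap J(B)$ already follows from (1) for \emph{any} subalgebra (not just maximal ones) is worth recording, since you use it implicitly when forming the flag in (4).

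The gap is in (4), and it is exactly where you suspect. The $A_i$-bimodule property from (3) tells you that $J(A_{i+1})$ absorbs multiplication by $A_i$ on either side, but this is not enough to make the induction step $J(B)^{2i}\subseteq J(A_i) \Rightarrow J(B)^{2(i+1)}\subseteq J(A_{i+1})$ go through. Concretely, writing $J(B)^{2(i+1)} = J(B)^{2i}\cdot J(B)^2 \subseteq J(A_i)\cdot J(A_1)$, you would need $J(A_i)\cdot J(A_1)\subseteq J(A_{i+1})$, but the bimodule statement only gives $J(A_i)\cdot J(A_{i+1})\subseteq J(A_{i+1})$ and $J(A_{i-1})\cdot J(A_i)\subseteq J(A_i)$; neither lets you step \emph{down} the chain. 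Trying other factorizations of $J(B)^{2(i+1)}$ runs into the same wall already at $i=2$: one lands in $J(A_2)$ but not in $J(A_3)$. The difficulty is that (3) only asserts $J(A_{i+1})$ is an $A_i$-bimodule, not a $B$-bimodule, so the absorption does not propagate back to powers of $J(B)$. Your intuition that ``each step should cost two powers'' is appealing, but the mechanism you propose---absorption by the strictly larger algebra $A_i$---does not supply it. You would need either a separate argument that each $J(A_i)$ is in fact a $B$-bimodule (which is not automatic from (3)), or an entirely different route to the linear exponent, and your plan does not indicate one.
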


If $B$ is any $k$-algebra and $m$ is a positive integer $1 \le m \le \dim_kB$, then the collection $\AlgGr_{m}(B)$ of all $m$-dimensional subalgebras of $B$ is a Zariski-closed subset of the usual Grassmannian $\Gr_m(B)$. In particular, it is a projective variety over $k$. For any $A \in \AlgGr_m(B)$, we let $\Iso (A,B)$ denote the set of all $A' \in \AlgGr_m(B)$ such that $A\cong A'$ as $k$-algebras. Clearly $\Iso (A,B)$ is $\Aut_k(B)$-invariant, and hence a union of $\Aut_k(B)$-orbits. Suppose that $B = kQ/I$ is a basic algebra of dimension $n$. By the remarks above, it follows that $\AlgGr_{n-1}(B)$ is the variety of maximal subalgebras of $B$, and that there is a (biregular) bijection between maximal subalgebras of $B$ and maximal subalgebras of $B/J(B)^2 \cong T_2Q$. In other words, $\AlgGr_{n-1}(B)$ only depends on the underlying quiver $Q$, and so we define $\msa (Q) := \AlgGr_{n-1}(B)$. We can think of $\msa (Q)$ as the variety of maximal subalgebras of \emph{any} basic algebra with Ext quiver $Q$. See \cite{Sis} for more details.

Suppose that the underlying graph of $Q$ is a tree and $B = kQ$. Theorem \ref{t.basic} classifies $\Inn^*(B)$-orbits of $\msa (Q)$, and it is easy to see that for any such $B$, $\phi (A) = A$ for all $\phi \in H_B$. So classification of $\Aut_k(B)$-orbits boils down to determining which $\Inn^*(B)$-orbits of $\msa (Q)$ are related by elements of $\Aut (Q)$. More specifically, it is equivalent to classifying $\Aut (Q)$-orbits on the finite sets $V^2(Q)$ (for split type) and $\{ \{ u,v\} \subset Q_0 \mid u \neq v\}$ (for separable type). Although this may represent an intractable problem for general $Q$, it at least implies that every $B$-isoclass of $\msa (Q)$ is a finite union of $\Aut_k(B)$-orbits. In Section \ref{s.main} we will show that if $Q$ is a type $\mathbb{A}$ Dynkin quiver, then each $B$-isoclass consisting of connected algebras is a \emph{single} $\Aut_k(B)$-orbit. The first step will be to find presentations for each maximal subalgebra as a bound quiver algebra, which we do below. 

%%%%%%%%%%%%%%%%%%%%%%%%%%%%%%%%%%%%%%%%%%%%%%%%%%%%%%%%%%

\section{Presentations of Maximal Subalgebras}\label{s.pres}

Corollary \ref{c.inherit} (1) states that if $B$ is basic, then all of its maximal subalgebras are also basic. In particular, they can be presented as bound quiver algebras. Ideally, one hopes for explicit presentations of maximal subalgebras in terms of a given presentation for $B$. More specifically, if $B$ is given as $B = kQ/I$ and $A$ is a maximal subalgebra of $B$, one would like a combinatorial procedure to obtain the Ext quiver of $A$, call it $\Gamma$, from $Q$, and another procedure to find generators for the kernel of the projection map $k\Gamma \rightarrow A$. As it currently stands, if $I \subset kQ$ is an arbitrary admissible ideal, and $A \subset kQ/I$ is a maximal subalgebra of split type, then it is not clear to the author how one can explicitly reconstruct $\Gamma$ from $Q$. Nevertheless, Theorem \ref{t.basic} provides us with some insight into the presentation problem. In fact, it is good enough to give us a full description for separable type subalgebras, as well as explicit presentations for \emph{all} maximal subalgebras in the hereditary case, i.e. $I = \{ 0 \}$.    

We start by describing presentations for maximal subalgebras of separable type. Take a $2$-element subset $\{ u,v\} \subset Q_0$, and let $\Gamma$ be the quiver obtained from $Q$ by gluing $u$ and $v$ together. More explicitly, $\Gamma$ has vertex set $Q_0\setminus\{ u,v\} \cup \{u+v\}$, and for all $w,y \in Q_0\setminus\{ u,v\}$ we have 

\begin{equation*} 
(u+v)\Gamma_1y = uQ_1y \cup vQ_1y, 
\end{equation*} 
\begin{equation*} 
w\Gamma_1(u+v) = wQ_1u\cup wQ_1v, 
\end{equation*} 
\begin{equation*} 
(u+v)\Gamma_1(u+v) = uQ_1u \cup uQ_1v \cup vQ_1u \cup vQ_1v. 
\end{equation*} 

\noindent In other words, $\Gamma_1$ is just a re-partitioning of $Q_1$ into arrows with possibly new endpoints. This induces a bijective map $\phi : Q_1 \rightarrow \Gamma_1$. Hence, if $p = \alpha_1\cdots \alpha_d$ is a path in $Q$, then $\phi (p) := \phi (\alpha_1)\cdots \phi(\alpha_d)$ is a well-defined path in $\Gamma$. We can extend this to an algebra map $\phi : kQ \rightarrow k\Gamma$ by defining
\begin{equation*} 
\phi(w) = w \text{ for all $w \in Q_0\setminus\{ u,v\}$,}
\end{equation*} 
\begin{equation*} 
\phi(u) = \phi(v) = u+v,
\end{equation*} 

\noindent and extending to $k$-linear combinations of arbitrary paths.

\begin{proposition}\label{p.seppres}
Let $B = kQ/I$, and $A$ a maximal subalgebra of separable type. Suppose that $A$ is $\Inn^*(B)$-conjugate to $A(u+v)$, for some two-element subset $\{u,v\} \subset Q_0$. Then $A \cong k\Gamma/I'$, where 
\begin{enumerate} 
\item $\Gamma$ is obtained from $Q$ by gluing vertices $u$ and $v$.  
\item $I'$ is generated by relations in $\phi (I)$, along with elements of the form $\phi(\alpha) \phi(\beta)$, where either $\alpha \in Q_1u$ and $\beta \in vQ_1$, or $\alpha \in Q_1v$ and $\beta \in uQ_1$.
\end{enumerate}
\end{proposition}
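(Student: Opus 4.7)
The plan is to construct an algebra map $\psi: k\Gamma \to B$ whose image is $A(u+v)$ and whose kernel is the ideal $I'$ described in the statement; since $A$ is $\Inn^*(B)$-conjugate to $A(u+v)$ and such conjugates are mutually isomorphic as $k$-algebras, I may assume $A = A(u+v)$ throughout.

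First I define $\psi$ on the standard generators of $k\Gamma$: set $\psi(u+v) = u+v$, $\psi(w) = w$ for $w \in Q_0\setminus\{u,v\}$, and $\psi(\phi(\alpha)) = \alpha + I$ for each $\alpha \in Q_1$. A routine check of the defining relations of $k\Gamma$ (idempotency and pairwise orthogonality of the vertex images, vertices summing to $1_B$, and matching of source and target for each arrow) shows that $\psi$ extends to a well-defined $k$-algebra homomorphism. The image contains the Wedderburn--Malcev complement $k(u+v)\oplus\bigoplus_{w\neq u,v}kw$ together with every arrow of $B$, and hence all of $J(B)$; therefore $\psi(k\Gamma) = A(u+v)$. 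To see that $I'\subseteq \ker\psi$, I would verify this on the two families of generators: for $r\in I$, expanding $r$ as a $k$-linear combination of paths of length $\geq 2$ in $Q$ gives $\psi(\phi(r)) = r+I = 0$; for a bad $2$-step $\phi(\alpha)\phi(\beta)$ with $\alpha\in Q_1u$ and $\beta\in vQ_1$ (or the symmetric case), $\psi(\phi(\alpha)\phi(\beta)) = \alpha\beta + I = 0$ because $\alpha\beta = \alpha uv\beta = 0$ in $kQ$.

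The core of the argument is the reverse inclusion $\ker\psi\subseteq I'$. I would decompose $k\Gamma = P_g \oplus P_b$ as $k$-vector spaces, where $P_g$ is the span of the ``good paths'' -- those of the form $\phi(p)$ for a path $p$ in $Q$, including length-$0$ paths -- and $P_b$ is the span of the ``bad paths,'' i.e., paths of length $\geq 2$ in $\Gamma$ that contain some bad $2$-step. Any bad path decomposes by left/right concatenation around a single bad $2$-step, so $P_b$ coincides with the two-sided ideal generated by the bad $2$-steps; in particular $P_b\subseteq I'$. Given $x\in\ker\psi$, write $x = x_g + x_b$ with $x_g\in P_g$ and $x_b\in P_b\subseteq I'$, so that $x_g\in\ker\psi$. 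Lift $x_g = \phi(y)$ for some $y\in kQ$, unique modulo the line $k(u-v)$ which $\phi$ annihilates, and split $y = y_0 + y_1$ with $y_0 \in kQ_0$ and $y_1 \in J(Q)$. Comparing the vertex and radical components of $B$, the condition $\psi(\phi(y)) = 0$ forces $y_0\in k(u-v)$ and $y_1\in I$; hence $x_g = \phi(y_1)\in\phi(I)\subseteq I'$. A dimension count $\dim k\Gamma/I' = (\dim kQ - 1) - \dim I = \dim B - 1 = \dim A$ offers a cross-check.

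The main obstacle I anticipate is the asymmetry introduced by the identification $\phi(u) = \phi(v) = u+v$: the ``algebra map'' $\phi : kQ \to k\Gamma$ is not an algebra map in the strict sense (for example $\phi(u)\phi(v) = u+v \neq 0 = \phi(uv)$), so one cannot simply transport ideals along $\phi$. Isolating the good-path and bad-path bases of $k\Gamma$ at the start, and verifying that both $I'$ and $\ker\psi$ split cleanly along $P_g \oplus P_b$, is what makes the kernel computation tractable.
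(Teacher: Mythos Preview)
Your proposal is correct and follows essentially the same approach as the paper: define the surjection $k\Gamma \to A(u+v)$ on vertices and arrows via the bijection $\Gamma_1 \leftrightarrow Q_1$, and identify its kernel with $I'$. The paper's proof is a two-sentence sketch that simply asserts the kernel claim, whereas you supply the good-path/bad-path decomposition and the lift through $\phi$ that actually justify it; there is no substantive difference in strategy.
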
    

\begin{proof} 
There is a map $ k\Gamma \rightarrow A(u+v)$ which acts as the identity on $Q_0\setminus \{u,v\}$, sends $u+v \in \Gamma_0$ to $u+v \in A(u+v)$, and which acts on $\Gamma_1$ via the bijection $\Gamma_1 \leftrightarrow Q_1$. The kernel of this map is precisely the admissible ideal $I'$.
\end{proof}

\begin{proposition}\label{c.splitpres}  
Let $B = kQ/I$, and $A$ a maximal subalgebra of split type. Suppose that $A$ is $\Inn^*(B)$-conjugate to $A(u,v,U)$, for some $(u,v) \in V^2(Q)$ and codimension-$1$ subspace $U\le ukQ_1v$. Write $A \cong k\Gamma/I'$ for a certain quiver $\Gamma$ and admissible ideal $I'$. Then $\Gamma_0 = Q_0$, and for all $w,x \in \Gamma_0$, 

\begin{equation*}
\dim_kw\left(J(A)/J(A)^2\right)x = \dim_kw\left(J(A)/J(B)^2\right)x + \dim_kw\left(J(B)^2/J(A)^2\right)x.
\end{equation*} 
\noindent In particular: 

\begin{enumerate}
 \item For all $w \neq u$ and $x \neq v$, there are $w\left( J(B)/J(B)^2\right)x$ arrows from $w$ to $x$ in $\Gamma$,
 \item There are $\dim_ku\left(J(B)/J(B)^2\right)v -1$ arrows from $u$ to $v$, and  
 \item There are at least $\dim_ku\left(J(B)/J(B)^2\right)x$ (resp. $\dim_kw\left(J(B)/J(B)^2\right)v$) arrows from $u$ to $x$ (resp. from $w$ to $v$).  
 \end{enumerate}
 \noindent Furthermore, $J(B)^4 \subset J(A)^2$, so that any arrows in $\Gamma$ that do not appear as arrows in $Q$ arise from elements of $J(B)^2$ or $J(B)^3$.
\end{proposition}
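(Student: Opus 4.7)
Since the statement concerns invariants (the dimensions of $w(J(A)/J(A)^2)x$, etc.) that are preserved under $\Inn^*(B)$-conjugation, I may assume $A = A(u,v,U)$ outright. The identification $\Gamma_0 = Q_0$ is then immediate: $A$ contains the complete set of primitive orthogonal idempotents $\{w\}_{w\in Q_0}$, and $A$ is basic by Corollary \ref{c.inherit}(1), so $|\Gamma_0| = \dim_k A/J(A) = |Q_0|$.

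For the central dimension formula, Corollary \ref{c.inherit}(3) yields the chain $J(A)^2 \subset J(B)^2 \subset J(A)$. The short exact sequence
\[
0 \to J(B)^2/J(A)^2 \to J(A)/J(A)^2 \to J(A)/J(B)^2 \to 0,
\]
sandwiched with $w$ on the left and $x$ on the right (which remains exact, since multiplication by the idempotents splits each quotient into summands), gives the asserted identity. Next, from the explicit decomposition defining $A(u,v,U)$, one reads off
\[
J(A)/J(B)^2 \cong U \oplus \bigoplus_{(w',y')\neq (u,v)} w' kQ_1 y',
\]
so $\dim_k w(J(A)/J(B)^2)x$ equals $\dim_k w(J(B)/J(B)^2)x$ when $(w,x)\neq (u,v)$, and equals $\dim_k u(J(B)/J(B)^2)v - 1$ when $(w,x) = (u,v)$. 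Since the number of arrows from $w$ to $x$ in $\Gamma$ is $\dim_k w(J(A)/J(A)^2)x$ and the term $\dim_k w(J(B)^2/J(A)^2)x$ is non-negative, this already produces the lower bounds claimed in (2) and (3).

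The main remaining input, needed to get the equality in (1), is the vanishing $w(J(B)^2/J(A)^2)x = 0$ whenever $w \neq u$ and $x \neq v$. I would establish this by a direct case analysis on paths: any length-$\ell$ element $p \in w(J(B)^2)x$ with $\ell \ge 2$ factors as $p = q_1 q_2$ where $q_1$ is an arrow starting at $w$. Since $w\neq u$, $q_1 \in J(A)$. If $\ell = 2$, then $q_2$ is an arrow ending at $x\neq v$, so $q_2 \in J(A)$ as well; if $\ell \geq 3$, then $q_2 \in J(B)^2 \subset J(A)$ by Corollary \ref{c.inherit}(3). Either way $p \in J(A)^2$, proving the vanishing.

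Finally, $J(B)^4 = (J(B)^2)^2 \subset J(A)\cdot J(A) = J(A)^2$, again from $J(B)^2 \subset J(A)$; hence the quotient $J(B)^2/J(A)^2$ that supplies the ``new'' arrows factors through $J(B)^2/J(B)^4$, which is spanned by images of paths of length $2$ and $3$. The only nontrivial step is the case-splitting for the vanishing in (1); the rest is bookkeeping from the explicit form of $A(u,v,U)$ in Theorem \ref{t.basic} and the radical chain in Corollary \ref{c.inherit}.
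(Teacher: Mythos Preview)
Your argument is correct and follows the same route as the paper: reduce to $A=A(u,v,U)$, use the short exact sequence $0\to J(B)^2/J(A)^2\to J(A)/J(A)^2\to J(A)/J(B)^2\to 0$ (equivalently, the third-isomorphism identification the paper writes) for the dimension formula, read off (1)--(3) from the explicit decomposition of $J(A)$, and deduce $J(B)^4\subset J(A)^2$ from Corollary~\ref{c.inherit}(3). Your path-factorization argument showing $w(J(B)^2/J(A)^2)x=0$ for $w\neq u$ and $x\neq v$ supplies a detail the paper leaves implicit.
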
  

\begin{proof} 
 $A(u,v,U)/J(A(u,v,U)) = kQ_0$ implies that $\Gamma_0 = Q_0$. The dimension formula follows from the $kQ_0$-bimodule isomorphism $J(A)/J(A)^2/\left( J(B)^2/J(A)^2 \right) \cong J(A)/J(B)^2$, and claims (1)-(3) follow from the formula  

\begin{equation*}
J(A) = J(B)\cap A = U \oplus \left( \bigoplus_{(w,x)\neq (u,v)}{wkQ_1x} \right) \oplus J(B)^2.
\end{equation*}

\noindent The final claim follows from Corollary \ref{c.inherit} (3).
\end{proof}   

\noindent Although this corollary does not give us an explicit form for $I'$, we can use it to present maximal subalgebras of split type in the hereditary case, i.e. when $I = \{ 0\}$.  

\begin{proposition}\label{c.pathalg}
Let $B = kQ$ for an acyclic quiver $Q$, and $A \subset B$ a maximal subalgebra conjugate to $A(u,v,U)$, for some $(u,v) \in V^2(Q)$. Write $A \cong k\Gamma / I'$, for a finite quiver $\Gamma$ and admissible ideal $I' \subset k\Gamma$. Then $\Gamma_0 = Q_0$, and $\Gamma_1$ can be obtained from $Q$ as follows: 
\begin{enumerate} 
\item Replace the $|uQ_1v|$ arrows from $u$ to $v$ in $Q$ with $|uQ_1v|-1$ arrows, indexed by a fixed basis $\{ \alpha_1,\ldots , \alpha_d\}$ of $U$;
\item For each arrow $\gamma$ with target $u$, add an arrow $\overline{\gamma} : s(\gamma) \rightarrow v$; 
\item For each arrow $\gamma$ with source $v$, add an arrow $\underline{\gamma}: u \rightarrow t(\gamma)$.
\end{enumerate} 
\noindent Furthermore, $I'$ can be taken to be the ideal generated by the relations $\overline{\beta}\gamma - \beta\underline{\gamma}$, for all arrows $\beta$ and $\gamma$ in $Q$ with $t(\beta) = u$ and $s(\gamma) = v$. 
\end{proposition}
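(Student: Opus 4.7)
The plan is to construct mutually inverse algebra homomorphisms $\Phi \colon k\Gamma/I' \to A$ and $\Psi \colon A \to k\Gamma/I'$, using the natural basis of $A$ consisting of vertices, the $\alpha_i$'s, the non-$(u,v)$ arrows of $Q$, and the paths of length $\geq 2$ in $Q$.

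First, because the arrows of $Q$ from $u$ to $v$ span $ukQ_1v$ while $U$ is a proper subspace, at least one such arrow $\xi$ satisfies $\xi \notin U$; fix one, so that $\{\alpha_1, \ldots, \alpha_d, \xi\}$ is a basis of $ukQ_1v$. Define $\Phi \colon k\Gamma \to A$ on generators by $\Phi(w) = w$ on vertices, $\Phi(\alpha_i) = \alpha_i$ and $\Phi(\gamma) = \gamma$ for the non-$(u,v)$ arrows of $Q$, $\Phi(\overline{\gamma}) = \gamma\xi$, and $\Phi(\underline{\gamma}) = \xi\gamma$. Source and target data match, so $\Phi$ is a well-defined algebra homomorphism, and $\Phi(\overline{\beta}\gamma - \beta\underline{\gamma}) = \beta\xi\gamma - \beta\xi\gamma = 0$ gives $I' \subseteq \ker\Phi$, so $\Phi$ descends to a map $k\Gamma/I' \to A$.

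Next, define $\Psi$ on the chosen basis of $A$. Vertices, the $\alpha_i$'s, and non-$(u,v)$ arrows of $Q$ go to their evident counterparts in $\Gamma$. For a path $p = \gamma_1 \cdots \gamma_n$ of length $\geq 2$ in $Q$: if no $\gamma_i$ lies in $uQ_1v$, take the direct lift to $\Gamma$. Otherwise there is a unique such position $i$, because $Q$ is acyclic — two arrows from $u$ to $v$ in a common path would force a cycle $v \leadsto u \to v$. Expand $\gamma_i = c\xi + \sum_k c_k\alpha_k$ in the basis $\{\alpha_1, \ldots, \alpha_d, \xi\}$, send each $\alpha_k$-term to the corresponding direct lift, and send the $\xi$-term to $\gamma_1 \cdots \overline{\gamma_{i-1}}\gamma_{i+1}\cdots\gamma_n$ when $i \geq 2$, or to $\underline{\gamma_2}\gamma_3\cdots\gamma_n$ when $i = 1$.

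The main obstacle is verifying that $\Psi$ is a well-defined algebra map; once this is established, $\Phi \circ \Psi = \id_A$ follows from the lifting construction, and $\Psi \circ \Phi = \id_{k\Gamma/I'}$ reduces to a check on algebra generators of $k\Gamma$ (vertices and arrows). Well-definedness requires that for $1 < i < n$, the alternative lift $\gamma_1\cdots\gamma_{i-1}\underline{\gamma_{i+1}}\gamma_{i+2}\cdots\gamma_n$ of the $\xi$-term agrees with ours mod $I'$; their difference is precisely $\gamma_1\cdots\gamma_{i-2}(\overline{\gamma_{i-1}}\gamma_{i+1} - \gamma_{i-1}\underline{\gamma_{i+1}})\gamma_{i+2}\cdots\gamma_n \in I'$, which is exactly the content of the defining relations. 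Multiplicativity of $\Psi$ is a case analysis on pairs of basis elements of $A$; the only delicate case is a product involving an $\alpha_i$, where one uses that writing $\alpha_i = \sum_k a_{ik}\xi_k$ in the arrow basis of $ukQ_1v$ gives $\sum_k a_{ik}c_k = 0$ since $\alpha_i \in U$ has zero $\xi$-coefficient. Longer paths containing $\xi$ are handled by applying this base case iteratively.
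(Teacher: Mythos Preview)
Your proof is correct and follows the same approach as the paper: choose a complement $\xi$ to $U$ in $ukQ_1v$ (the paper calls it $\alpha_{d+1}$), define $\Phi\colon k\Gamma \to A$ via $\overline{\gamma} \mapsto \gamma\xi$ and $\underline{\gamma} \mapsto \xi\gamma$, observe that $I' \subseteq \ker\Phi$, and then verify that the induced map $k\Gamma/I' \to A$ is an isomorphism. The paper dispatches this last step as a ``straightforward check,'' whereas you construct the inverse $\Psi$ explicitly and sketch the multiplicativity verification, but the underlying argument is identical.
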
 

\begin{proof} 
We may assume without loss of generality that $A = A(u,v,U)$. Find $\alpha_{d+1} \in ukQ_1v$ such that $U \oplus k\alpha_{d+1} = ukQ_1v$. Then for each $\gamma \in Q_1$ with $t(\gamma ) = u$, $\gamma\alpha_{d+1} \in J(A)\setminus J(A)^2$. If $w = s(\gamma)$, then clearly the paths of the form $\gamma \alpha_{d+1}$, along with the arrows in $Q_1$ from $w$ to $v$, form a basis for $w(J(A)/J(A)^2)v$. Define $\overline{\gamma} = \gamma\alpha_{d+1}$. A similar argument exhibits a basis for $u(J(A)/J(A)^2)x$, and allows us to define $\underline{\gamma} = \alpha_{d+1}\gamma$ if $\gamma$ is an arrow in $Q$ from $v$ to $x$. The form for $\Gamma$ then follows from Proposition \ref{c.splitpres}. For all arrows $\beta, \gamma \in Q_1$ with $t(\beta) = u$ and $s(\gamma) = v$, $\overline{\beta}\gamma = \beta\alpha_{d+1}\gamma = \beta\underline{\gamma}$. Hence, $\overline{\beta}\gamma-\beta\underline{\gamma}$ is in the kernel of the projection map $k\Gamma \rightarrow A$. If $I'$ is the ideal generated by these commutation relations, then it is straightforward to check that the induced $k$-algebra projection $k\Gamma/I' \rightarrow A$ has an inverse, so that the desired isomorphism holds.
 %map $A \rightarrow k\Gamma / I'$ is an inverse to the induced projection $k\Gamma/I' \rightarrow A$, so that the desired isomorphism holds.
\end{proof}  

\begin{example}\label{e.A4} 

Let $B = kQ$, where  
\[
Q=
\vcenter{\hbox{
\begin{tikzpicture}[point/.style={shape=circle,fill=black,scale=.3pt,outer sep=3pt},>=latex, scale = 2]
\node[point,label={below:$v_1$}] (1) at (0,0) {} ; 
\node[point,label={below:$v_2$}] (2) at (1,0) {}; 
\node[point,label={below:$v_3$}] (3) at (2,0) {}; 
\node[point,label={below:$v_4$}] (4) at (3,0) {}; 
\path[->] (1) edge node[above] {$\alpha$} (2) ; 
\path[->] (2) edge node[above] {$\beta$} (3) ; 
\path[->] (3) edge node[above] {$\gamma$} (4) ;
%\node[point,label={below:$2$}] (2) at (2,0) {} edge[in=45,out=-45,loop] node[right] {$b$} ();
%\path[->] (1) edge node[above] {$a$} (2) ;
\end{tikzpicture} }}
\]  
\noindent is an equioriented Dykin quiver of type $\mathbb{A}_4$. Then any maximal subalgebra of separable type must be $\Inn^*(B)$-conjugate to one of the six bound quiver algebras displayed below: \newline

\begin{center}
$
\begin{array}{| c | c | c |} 

\hline k\Gamma / I & \Gamma & I \\ 

\hline A(v_1+v_2) & \begin{tikzpicture}[point/.style={shape=circle,fill=black,scale=.3pt,outer sep=3pt},>=latex, baseline=-3, scale =2] 
\node[point] (1) at (0,0) {} edge[in = -290, out = -250, loop] node[above] {$\alpha$} () ; 
\node[point] (2) at (1,0) {}; 
\node[point] (3) at (2,0) {}; 
\path[->] (1) edge node[above] {$\beta$} (2) ; 
\path[->] (2) edge node[above] {$\gamma$} (3) ; 
\end{tikzpicture} & (\alpha^2) \\

\hline A(v_1+v_3) & \begin{tikzpicture}[point/.style={shape=circle,fill=black,scale=.3pt,outer sep=3pt},>=latex, baseline=-3, scale=2]  
\node[point] (1) at (0,0) {} ; 
\node[point] (2) at (1,0) {} ; 
\node[point] (3) at (2,0) {} ; 
\path[->] (2) edge node[above] {$\gamma$} (1) ; 
\path[->] (2.25) edge[bend left=25] node[above] {$\alpha$} (3.25) ; 
\path[->] (3.-25) edge[bend right=-25] node[below] {$\beta$} (2.-25) ;
 \end{tikzpicture} & (\beta \alpha) \\ 

\hline A(v_1+v_4) & \begin{tikzpicture}[point/.style={shape=circle,fill=black,scale=.3pt,outer sep=3pt},>=latex, baseline=-3, scale=2]  
\node[point] (1) at (0,0) {} ; 
\node[point] (2) at (1,.5) {} ; 
\node[point] (3) at (2,0) {} ; 
\path[->] (1) edge node[above] {$\alpha$} (2) 
(2) edge node[above] {$\beta$} (3) 
(3) edge node[below] {$\gamma$} (1);
\end{tikzpicture}
 & (\gamma \alpha) \\ 

\hline A(v_2+v_3) & \begin{tikzpicture}[point/.style={shape=circle,fill=black,scale=.3pt,outer sep=3pt},>=latex, baseline=-3, scale=2]  
\node[point] (2) at (1,0) {} edge[in = -290, out = -250, loop] node[above] {$\beta$} () ; 
\node[point] (1) at (0,0) {}; 
\node[point] (3) at (2,0) {}; 
\path[->] (1) edge node[above] {$\alpha$} (2) ; 
\path[->] (2) edge node[above] {$\gamma$} (3) ; 
\end{tikzpicture}
 & (\beta^2) \\ 

\hline A(v_2+v_4) & \begin{tikzpicture}[point/.style={shape=circle,fill=black,scale=.3pt,outer sep=3pt},>=latex, baseline=-3, scale=2]   
\node[point] (1) at (0,0) {} ; 
\node[point] (2) at (1,0) {} ; 
\node[point] (3) at (2,0) {} ; 
\path[->] (1) edge node[above] {$\alpha$} (2) ; 
\path[->] (2.25) edge[bend left=25] node[above] {$\beta$} (3.25) ; 
\path[->] (3.-25) edge[bend right=-25] node[below] {$\gamma$} (2.-25) ;
\end{tikzpicture}
 & (\gamma \beta) \\ 

\hline A(v_3+v_4) & \begin{tikzpicture}[point/.style={shape=circle,fill=black,scale=.3pt,outer sep=3pt},>=latex, baseline=-3,scale=2] 
\node[point] (3) at (2,0) {} edge[in = -290, out = -250, loop] node[above] {$\gamma$} () ; 
\node[point] (2) at (1,0) {}; 
\node[point] (1) at (0,0) {}; 
\path[->] (1) edge node[above] {$\alpha$} (2) ; 
\path[->] (2) edge node[above] {$\beta$} (3) ; 
\end{tikzpicture} & (\gamma^2) \\ 
\hline

\end{array} 
$  
\end{center} 

\noindent Any maximal subalgebra of split type must be $\Inn^*(B)$-conjugate to one of the three bound quiver algebras displayed below:

\begin{center}  
$  
\begin{array}{| c | c | c |} 

\hline k\Gamma / I & \Gamma & I \\ 

\hline A(v_1,v_2,\{ 0 \} ) & \begin{tikzpicture}[point/.style={shape=circle,fill=black,scale=.3pt,outer sep=3pt},>=latex, baseline=-3,scale=2] 
\node[point,label={below:$v_1$}] (1) at (0,-.5) {} ;  
\node[point,label={above:$v_2$}] (2) at (0,.5) {} ; 
\node[point,label={above:$v_3$}] (3) at (1,0) {} ; 
\node[point,label={above:$v_4$}] (4) at (2,0) {} ; 
\path[->] (1) edge node[below] {$\underline{\beta}$} (3) 
(2) edge node[above] {$\beta$} (3) 
(3) edge node[above] {$\gamma$} (4);
\end{tikzpicture}
 & \{ 0 \} \\ 

\hline A(v_2,v_3,\{ 0 \} ) &  \begin{tikzpicture}[point/.style={shape=circle,fill=black,scale=.3pt,outer sep=3pt},>=latex, baseline=-3,scale=2]  
\node[point,label={left:$v_1$}] (1) at (0,0) {} ; 
\node[point,label={above:$v_3$}] (3) at (1,.5) {} ; 
\node[point,label={below:$v_2$}] (2) at (1,-.5) {} ; 
\node[point,label={right:$v_4$}] (4) at (2,0) {} ; 
\path[->] (1) edge node[above] {$\overline{\alpha}$} (3)
(1) edge node[below] {$\alpha$} (2) 
(3) edge node[above] {$\gamma$} (4) 
(2) edge node[below] {$\underline{\gamma}$} (4);
\end{tikzpicture}
 & (\overline{\alpha}\gamma - \alpha\underline{\gamma}) \\ 

\hline A(v_3,v_4.\{ 0 \} ) & \begin{tikzpicture}[point/.style={shape=circle,fill=black,scale=.3pt,outer sep=3pt},>=latex, baseline=-3,scale=2]  
\node[point,label={left:$v_1$}] (1) at (0,0) {} ; 
\node[point,label={above:$v_2$}] (2) at (1,0) {} ; 
\node[point,label={right:$v_3$}] (3) at (2,.5) {} ; 
\node[point,label={right:$v_4$}] (4) at (2,-.5) {} ; 
\path[->] (1) edge node[above] {$\alpha$} (2) 
(2) edge node[above] {$\beta$} (3) 
(2) edge node[below] {$\overline{\beta}$} (4);
\end{tikzpicture}
 & \{ 0 \} \\ 

\hline

\end{array}
$ 
\end{center}

\end{example}

\noindent {\bf{Note:}} If $I \neq \{ 0 \}$, then $I \subset J(Q)^2 \subset A$ allows us to consider relations in $I$ as ``generalized relations'' in $\Gamma$. We say ``generalized'' because these elements may not actually lie in $J(A)^2$. In other words, we can always realize $A$ as a \emph{generally non-admissible} quotient of $k\Gamma/I$. This level of detail is sufficient for the purposes of this paper. \newline 

\begin{example} 
Consider $B = kQ/I$, where  

\[
Q=
\vcenter{\hbox{
\begin{tikzpicture}[point/.style={shape=circle,fill=black,scale=.3pt,outer sep=3pt},>=latex, scale = 2]
\node[point,label={below:$v_1$}] (1) at (0,0) {} ; 
\node[point,label={below:$v_2$}] (2) at (1,0) {}; 
\node[point,label={below:$v_3$}] (3) at (2,0) {}; 
\node[point,label={below:$v_4$}] (4) at (3,0) {}; 
\path[->] (1) edge node[above] {$\alpha$} (2) ; 
\path[->] (2.75) edge node[above] {$\beta_1$} (3.75)  
(2.-75) edge node[below] {$\beta_2$} (3.-75); 
\path[->] (3) edge node[above] {$\gamma$} (4) ;
%\node[point,label={below:$2$}] (2) at (2,0) {} edge[in=45,out=-45,loop] node[right] {$b$} ();
%\path[->] (1) edge node[above] {$a$} (2) ;
\end{tikzpicture} }}
\] 

\noindent and $I = (\alpha\beta_1-\alpha\beta_2 )$. Then the maximal subalgebra of $kQ$ corresponding to the triple $(v_2,v_3, k\beta_1)$ can be presented as $A= k\Gamma / I'$, where  

\[ 
\Gamma = 
\vcenter{\hbox{ 
 \begin{tikzpicture}[point/.style={shape=circle,fill=black,scale=.3pt,outer sep=3pt},>=latex, baseline=-3,scale=2]  
\node[point,label={left:$v_1$}] (1) at (0,0) {} ; 
\node[point,label={above:$v_3$}] (3) at (1,.5) {} ; 
\node[point,label={below:$v_2$}] (2) at (1,-.5) {} ; 
\node[point,label={right:$v_4$}] (4) at (2,0) {} ; 
\path[->] (1) edge node[above] {$\overline{\alpha}$} (3)
(1) edge node[below] {$\alpha$} (2) 
(3) edge node[above] {$\gamma$} (4) 
(2) edge node[below] {$\underline{\gamma}$} (4) 
(2) edge node[left] {$\beta_2$} (3); 
\end{tikzpicture}
}}
\] 

\noindent and $I' = (\overline{\alpha}\gamma - \alpha\underline{\gamma})$. Therefore, $A(v_2,v_3,k\beta_1) = A/I = A/(\overline{\alpha} - \alpha\beta_2)$. It follows that $\overline{\alpha}$ is in the radical square of $A(v_2,v_3,k\beta_1)$. Hence, to present $A(v_2,v_3, k\beta_1)$, we must actually bound the quiver

\[ 
\vcenter{\hbox{ 
 \begin{tikzpicture}[point/.style={shape=circle,fill=black,scale=.3pt,outer sep=3pt},>=latex, baseline=-3,scale=2]  
\node[point,label={left:$v_1$}] (1) at (0,0) {} ; 
\node[point,label={above:$v_3$}] (3) at (1,.5) {} ; 
\node[point,label={below:$v_2$}] (2) at (1,-.5) {} ; 
\node[point,label={right:$v_4$}] (4) at (2,0) {} ; 
\path[->] (1) edge node[below] {$\alpha$} (2) 
(3) edge node[above] {$\gamma$} (4) 
(2) edge node[below] {$\underline{\gamma}$} (4) 
(2) edge node[left] {$\beta_2$} (3); 
\end{tikzpicture}
}}
\] 

\noindent by the relation $\alpha\beta_2\gamma - \alpha \underline{\gamma}$. 

\end{example}  

%%%%%%%%%%%%%%%%%%%%%%%%%%%%%%%%%%%%%%%%%%%%%%%%%%%%%%%%%%%%%%%%%%

\section{Type $\mathbb{A}$ Path Algebras}\label{s.main}

In this section we prove theorem \ref{t.main}. Unless otherwise stated, let $Q$ be a type-$\mathbb{A}$ Dynkin quiver on $n$ vertices, in other words a quiver whose underlying graph is of the form: 

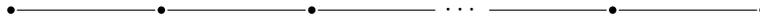
\begin{figure}[th]\label{f.dynkin}
\[ 
\vcenter{\hbox{ 
 \begin{tikzpicture}[point/.style={shape=circle,fill=black,scale=.3pt,outer sep=3pt},>=latex, baseline=-3,scale=2]  
\node[point] (1) at (0,0) {} ; 
\node[point] (2) at (1,0) {} ; 
\node[point] (3) at (2,0) {} ;  
\node (4) at (3,0) {$\cdots$} ;
\node[point] (5) at (4,0) {} ; 
\node[point] (6) at (5,0) {} ;
\path (1) edge  (2)  
(2) edge (3) 
(3) edge (4) 
(4) edge (5) 
(5) edge (6) ;
\end{tikzpicture}
}}
\]
\caption{A type $\mathbb{A}$ Dynkin diagram.}
\end{figure}

Let $B = kQ$. If $n = 2m$ is even, label the vertices of Figure 1 from left to right as $v_{-m}$, $v_{-m+1}$,$\ldots$, $v_{-1}$, $v_1$,$\ldots$, $v_{m-1}$, $v_m$. If $n = 2m+1$ is odd, label the vertices $v_{-m}$,$\ldots$, $v_{-1}$, $v_0$, $v_1$,$\ldots$, $v_m$ in an analogous manner. Whenever it is understood that we are talking about vertices, we may abbreviate ``$v_i$'' as ``$i$.'' We call $v_{-i}$ the \emph{predecessor} of $v_{-i+1}$, and $v_{-i+1}$ the \emph{successor} of $v_{-i}$. Note that if $n$ is even, $v_{-1}$ is the predecessor of $v_1$. If we need to talk about the predecessor (resp. successor) of a vertex $v$, we simply denote it by $\operatorname{pred}(v)$ (resp. $\operatorname{succ}(v)$). We recursively define $\operatorname{pred}^i(v)$ and $\operatorname{succ}^i(v)$ as follow: $\operatorname{pred}^1(v) = \operatorname{pred}(v)$ and $\operatorname{succ}^1(v) = \operatorname{succ}^1(v)$. If $\operatorname{pred}^i(v)$ and $\operatorname{succ}(v)$ have already been defined, then $\operatorname{pred}^{i+1}(v)$ is the predecessor of $\operatorname{pred}^i(v)$, and $\operatorname{succ}^{i+1}(v)$ is the successor to $\operatorname{succ}^i(v)$, whenever these vertices are defined. Let $\underline{w}$ denote the binary word of length $n-1$ such that $\underline{w}(i) = +1$ if the edge from $v_i$ to its successor starts at $v_i$, and $\underline{w}(i) = -1$ if the edge starts at the successor to $v_i$. We treat $\underline{w}$ as a function $\{ -m,\ldots , m-1\} \rightarrow \{ -1, +1\}$, or as an ordered $(m-1)$-tuple $\underline{w} = w_{-m}w_{-m+1}\cdots w_{m-1}$, where each $w_i \in \{ \pm 1\}$. Let $\alpha_i$ denote the edge between $v_i$ and its successor. To ease notation slightly, we use the following shorthand for maximal subalgebras of $kQ$:  \newline 

\begin{equation*} 
A_i := A(v_i, \operatorname{succ}(v_i), \{ 0\}) \text{ for all $i< m$,}
\end{equation*}  

\begin{equation*} 
A_{i,j} := A(v_i+v_j), \text{ for all $v_i, v_j \in Q_0$ with $i \neq j$.}
\end{equation*} \newline

%\noindent {\bf{Note:}} By a slight abuse of notation, we will write $A_{i-j}$ or $A_{-i-j}$ instead of $A_{i,-j}$ or $A_{-i,-j}$. \newline

Since any automorphism of $Q$ induces an automorphism of its underlying Dynkin diagram, $\Aut (Q) \le C_2$, the cyclic group of order 2. Define $\underline{w}^*$ to be the binary word of the quiver obtained from $Q$ by applying the unique non-identity automorphism of the underlying Dynkin graph to $Q$. In other words, $\underline{w}^*(i) = -\underline{w}(\operatorname{pred}(-i))$ for all $-m \le i \le m-1$. Clearly, $\underline{w}^{**} = \underline{w}$ and $\Aut (Q) = C_2$ if and only if $\underline{w}^* = \underline{w}$. The following lemma will be used extensively throughout our proofs:

\begin{lemma}\label{l.autos}
Let $i$ and $j$ be integers with $-m \le i,j \le m-1$. Suppose that $A_i$ has a connected Ext quiver. If $A_i \cong A_j$, then there exists an isomorphism $\psi : A_i \rightarrow A_j$ such that $\psi(Q_0) = Q_0$. If $\psi (\ell ) = -\ell$ for all $\ell \in Q_0$, then $\Aut (Q) = C_2$.
\end{lemma}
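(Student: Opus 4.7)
The proof naturally splits into two parts corresponding to the two assertions.

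For the first, let $\psi_0 : A_i \to A_j$ be any $k$-algebra isomorphism. The image $\psi_0(Q_0)$ is a complete set of primitive orthogonal idempotents of $A_j$, so by Theorem 10.3.6 of \cite{HGK} (invoked in Section \ref{s.bg}) there exists an $\iota \in \Inn(A_j)$ with $\iota(\psi_0(Q_0)) = Q_0$. Setting $\psi = \iota \circ \psi_0$ yields the desired vertex-preserving isomorphism. Notably, this first step does not require the connectedness hypothesis.

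For the second assertion, assume $\psi(v_\ell) = v_{-\ell}$ for all $\ell$. The goal is to deduce $\underline{w}^* = \underline{w}$, which is equivalent to $\Aut(Q) = C_2$. Since $\psi$ preserves the Jacobson radical, it descends to an isomorphism $\bar\psi : A_i/J(A_i)^2 \to A_j/J(A_j)^2$, inducing a quiver isomorphism $\sigma : \Gamma_i \to \Gamma_j$ acting on vertices as $v_\ell \mapsto v_{-\ell}$. By Proposition \ref{c.pathalg}, $\Gamma_i$ is obtained from $Q$ by deleting the edge $\alpha_i$ and inserting bypass arrows whose endpoints lie in $\{v_{i-1}, v_i, v_{i+1}, v_{i+2}\}$, and the analogous statement holds for $\Gamma_j$ around $\{v_{j-1}, v_j, v_{j+1}, v_{j+2}\}$.

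For any arrow $\alpha_k$ of $Q$ whose endpoints $\{v_k, v_{k+1}\}$ and flipped endpoints $\{v_{-k}, v_{-k-1}\}$ both lie outside these local modification zones, $\sigma(\alpha_k)$ must be an honest arrow of $Q$ between $v_{-k}$ and $v_{-k-1}$, since no bypass of $\Gamma_j$ has the required endpoints. The orientation is forced by $\sigma$'s action on endpoints, which immediately yields $\underline{w}(-k-1) = -\underline{w}(k)$, i.e., the defining relation $\underline{w}^*(k) = \underline{w}(k)$ at these ``safe'' positions.

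The main obstacle is the finitely many positions within the modification zones, where $\sigma$ could in principle exchange a $Q$-arrow with a bypass arrow. Here I plan to exploit the connectedness of $\Gamma_i$: since $Q \setminus \{\alpha_i\}$ is disconnected as a graph, connectedness of $\Gamma_i$ forces at least one bypass arrow to exist bridging the two components, and Proposition \ref{c.pathalg} then pins down the local orientations of $\alpha_{i-1}$ and $\alpha_{i+1}$. The symmetric statement for $\Gamma_j$, together with the vertex-flip constraint and a short finite case analysis at the boundary positions, should close the remaining cases and yield $\underline{w}^* = \underline{w}$ everywhere.
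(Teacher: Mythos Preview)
Your first part is identical to the paper's: compose an arbitrary isomorphism with an inner automorphism of $A_j$ to move the idempotents onto $Q_0$.

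For the second part, your route and the paper's diverge. You descend to the Ext quivers $\Gamma_i$, $\Gamma_j$ and analyze the induced quiver isomorphism $\sigma$. The paper instead stays at the level of the radicals: for each $\ell \neq i$, the arrow $\alpha_\ell$ gives $\dim_k \ell\bigl(J(A_i)/J(A_i)^2\bigr)\operatorname{succ}(\ell) = \dim_k \ell\bigl(J(B)/J(B)^2\bigr)\operatorname{succ}(\ell)$, and then the inclusion $J(A_j)\subset J(B)$ is used to read off $\underline{w}(\operatorname{pred}(-\ell))$ directly from $(-\ell)J(B)\operatorname{pred}(-\ell)$, without ever asking whether the corresponding arrow in $\Gamma_j$ is a $Q$-arrow or a bypass. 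For the one remaining position $\ell = i$, the paper uses the opposite inclusion $J(B)^2 \subset J(A_i)$: connectedness produces a length-two path such as $\alpha_{\operatorname{pred}(i)}\alpha_i \in J(A_i)$, and pushing this through $\psi$ and back into $J(B)$ forces $\underline{w}(\operatorname{pred}(-i)) = -\underline{w}(i)$.

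Your approach is workable, but you have over-estimated the obstacle. Because the bypass arrows of Proposition~\ref{c.pathalg} always connect vertices at graph-distance two in $Q$, they can \emph{never} be confused under $\sigma$ with a $Q$-arrow between adjacent vertices. So the ``modification zone'' restriction in your safe-position argument is unnecessary: for every $k \neq i$ the arrow $\alpha_k$ lies in $\Gamma_i$, and its image under $\sigma$ must be the unique adjacent-vertex arrow of $\Gamma_j$ between $v_{-k}$ and $\operatorname{pred}(v_{-k})$, namely $\alpha_{\operatorname{pred}(-k)}$. This immediately gives $\underline{w}(k) = -\underline{w}(\operatorname{pred}(-k))$ for all $k\neq i$, and the bijection on adjacent-vertex arrows forces $j = \operatorname{pred}(-i)$. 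The only genuinely exceptional position is $k=i$, where $\alpha_i$ is absent from $\Gamma_i$; here your connectedness idea works, since $\sigma$ must carry the (distance-two) bypass of $\Gamma_i$ to a (distance-two) bypass of $\Gamma_j$, and comparing their orientations yields $\underline{w}(i) = -\underline{w}(j)$. What the paper's radical-level argument buys is that it never needs to track bypasses or identify $j$ explicitly: the inclusions $J(B)^2 \subset J(A) \subset J(B)$ do the bookkeeping automatically.
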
  

\begin{proof}  
Let $\psi' : A_i \rightarrow A_j$ be any $k$-algebra isomorphism. Since $Q_0$ is a complete set of primitive orthogonal idempotents for both $A_i$ and $A_j$, $\psi' (Q_0)$ is a complete set of primitive orthogonal idempotents for $A_j$. Since $\Inn(A_j)$ acts transitively on such sets, there exists a $\iota_u \in \Inn(A_j)$ such that $\iota_u\psi' (Q_0) = \psi (Q_0)$. Setting $\psi = \iota_u\circ \psi'$ demonstrates the first claim. For the second, suppose that $\psi (\ell) = -\ell$ for all $\ell \in Q_0$. Then for all $\ell \neq i$, Proposition \ref{c.pathalg} implies $\dim_k\ell J(B)/J(B)^2\operatorname{succ}(\ell) = \dim_k \ell J(A_i)/J(A_i)^2 \operatorname{succ}(\ell) = \dim_k (-\ell) J(A_j)/J(A_j)^2 \operatorname{pred}(-\ell)$. But $\underline{w}(\ell) = +1$ if and only if $\dim_k\ell J(B)/J(B)^2 \operatorname{succ}(\ell) = 1$. But $Q$ is a tree: since $-\ell$ is adjacent to $\operatorname{pred}(-\ell)$ in $Q$ and $J(A_j) \subset J(B)$, the equality $\dim_k(-\ell)J(A_j)/J(A_j)^2\operatorname{pred}(-\ell )=1$ implies $\operatorname{pred}(-\ell)J(B)/J(B)^2(-\ell) = 0$ and $\underline{w}(\operatorname{pred}(-\ell)) = -1$. In other words, for all $\ell \neq i$ we have $\underline{w}(\ell) = - \underline{w}(\operatorname{pred}(-\ell) )$. Suppose by way of contradiction that $\underline{w}(i) = \underline{w}(\operatorname{pred}(-i))$. Without loss of generality we may assume $\underline{w}(i) = +1$. Then Proposition \ref{c.pathalg} and the assumption that $A_i$ has a connected Ext quiver together imply that at least one of the two conditions must hold: 
\begin{enumerate} 
\item $\operatorname{pred}(i)$ exists and $\underline{w}(\operatorname{pred}(i)) = +1$, or
\item $\operatorname{succ}^2(i)$ exists and $\underline{w}(\operatorname{succ}^2(i)) = +1$.
\end{enumerate} 
Suppose that the first condition holds. Then by Corollary \ref{c.inherit} (3), $\alpha_{\operatorname{pred}(i)}\alpha_i \in J(A_i)$ and the vector space $\operatorname{pred}(i)J(A_i)\operatorname{succ}(i)$ is non-zero. Therefore, the vector space $\operatorname{succ}(-i)J(A_j)\operatorname{pred}(-i)$ is non-zero as well. But note that since $Q$ is a type $\mathbb{A}$ Dynkin quiver, $\operatorname{succ}(-i)J(A_j)\operatorname{pred}(-i) = \operatorname{succ}(i)J(B)(-i)J(B)\operatorname{pred}(-i)$. But then in particular $(-i)J(B)\operatorname{pred}(-i) \neq 0$, which holds if and only if we have $\operatorname{pred}(-i)J(B)(-i) = 0$. In turn, this holds if and only if $\underline{w}(\operatorname{pred}(-i)) = -1$, a contradiction. So we must have $\underline{w}(i) = -\underline{w}(\operatorname{pred}(-i))$. If the second condition holds, a similar argument allows us to conclude again that $\underline{w}(i) = -\underline{w}(\operatorname{pred}(-i))$. But then we must have $\underline{w}^* = \underline{w}$ and hence $\Aut (Q) = C_2$.
\end{proof} 

To prove the main theorem, we prove it separately for maximal subalgebras of split type and separable type. For split type, we use Proposition \ref{c.pathalg} to distinguish three essential cases, depending on the form of the Ext quiver of $A_i$. If $w_{\operatorname{pred}(i)}w_iw_{\operatorname{succ}(i)} = (+1)(+1)(+1)$ or $(-1)(-1)(-1)$, then the Ext quiver will contain a commutative square: 

\begin{center}
 \begin{tikzpicture}[point/.style={shape=circle,fill=black,scale=.3pt,outer sep=3pt},>=latex, baseline=-3,scale=1.5]   
 \node[point] (i-1) at (0,0) {};  
 \node[point] (i) at (1,0.5) {};  
 \node (0) at (1,0) {$\circlearrowright$} ;
 \node[point,] (i+1) at (1,-0.5) {}; 
 \node[point] (i+2) at (2,0) {};  
 \path[->] (i-1) edge (i)  
 (i-1) edge (i+1) 
 (i) edge (i+2) 
 (i+1) edge (i+2);
%\node[point,label={above:$v_3$}] (3) at (1,.5) {} ; 
%\node[point,label={below:$v_2$}] (2) at (1,-.5) {} ; 
%\node[point,label={right:$v_4$}] (4) at (2,0) {} ; 
%\path[->] (-m) edge node[below] {$\alpha$} (2) 
%(3) edge node[above] {$\gamma$} (4) 
%(2) edge node[below] {$\underline{\gamma}$} (4) 
%(2) edge node[left] {$\beta_2$} (3); 
\end{tikzpicture},
\end{center} 

\noindent where the two middle vertices are $i$ and $\operatorname{succ}(i)$. In all other cases, $A_i$ will be hereditary. Again, Proposition 3.3 implies that the underlying graph of the Ext quiver of $A_i$ has the form: 

\begin{figure}[th]
\[
\vcenter{\hbox{ 
 \begin{tikzpicture}[point/.style={shape=circle,fill=black,scale=.3pt,outer sep=3pt},>=latex, baseline=-3,scale=1.5]   
 \node[point,label={below:$-m$}] (-m) at (0,0) {}; 
 \node[point,label={below:$\operatorname{pred}(i)$}] (i-1) at (2,0) {};  
 \node (b1) at (1,0) {$\cdots$};
 \node[point,label={below:$i$}] (i) at (3,0) {}; 
 \node[point,label={above:$\operatorname{succ}(i)$}] (i+1) at (4,0) {}; 
 \node[point,label={above:$\operatorname{succ}^2(i)$}] (i+2) at (5,0) {};  
 \node (b2) at (6,0) {$\cdots$};
 \node[point,label={below:$m$}] (m) at (7,0) {}; 
 \path (-m) edge (0.5,0) 
 (1.5,0) edge (i-1) 
 (i-1) edge (i) 
 (i+1) edge (i+2) 
 (i+2) edge (5.5,0) 
 (6.5,0) edge (m); 
\path[dotted] (i-1) edge[bend left=25] (i+1) 
(i) edge[bend right=25] (i+2);
%\node[point,label={above:$v_3$}] (3) at (1,.5) {} ; 
%\node[point,label={below:$v_2$}] (2) at (1,-.5) {} ; 
%\node[point,label={right:$v_4$}] (4) at (2,0) {} ; 
%\path[->] (-m) edge node[below] {$\alpha$} (2) 
%(3) edge node[above] {$\gamma$} (4) 
%(2) edge node[below] {$\underline{\gamma}$} (4) 
%(2) edge node[left] {$\beta_2$} (3); 
\end{tikzpicture},
}}
\] 
\caption{}\label{f.quiverform}
\end{figure}

\noindent where for $i = -m$ we take this graph to only include the edges to the right of $i$, for $i = m-1$ we delete the section containing $\operatorname{succ}^2(i)$, and where dotted lines indicate that an edge may or may not be present. By the connectivity hypothesis, both dotted edges cannot be absent. Note that $A_{-m+1}$ and $A_{m-2}$ may themselves be path algebras over type $\mathbb{A}$ Dynkin quivers, say if $w_{-m}w_{-m+1}w_{-m+2} = (+1)(+1)(-1)$. This will be our second case. Otherwise, this graph will necessarily contain a trivalent vertex, and a unique leaf adjacent to this trivalent vertex. This will be our third and final case.  

We now prove our first case, where $A_i$ is a non-hereditary algebra:

%We wish to show that if $A$ is any maximal subalgebra of split type whose Ext quiver is connected, then $\Iso (A,B) = \Aut_k(B)\cdot A$. Lemma \ref{l.splitnotri} shows this to be the case when $A$ is hereditary and its Ext quiver does not contain a trivalent vertex.

% The quiver of $A_i$ Two classes of hereditary subalgebras arise, distinguished by the presence or absence of a trivalent vertex in (the underlying graph of) their Ext quiver. Note that if $A_i$ does not have a trivalent vertex, then either $i = -m+1$ or $i = m-2$. The other type are non-hereditary, arising when either $(+1)(+1)(+1)$ or $(-1)(-1)(-1)$ are subwords of $\underline{w}$. Each of these three cases are treated in the lemmas below. 

\begin{lemma}\label{l.splitnonher} 
Let $Q$ and $B$ be as before. Let $A$ be a non-hereditary maximal subalgebra of split type whose Ext quiver is connected. Then $\Iso (A,B) = \Aut_k(B)\cdot A$.
\end{lemma}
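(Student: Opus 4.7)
The plan is to classify the $\Inn^*(B)$-orbit representatives occurring in $\Iso(A, B)$ and then invoke Lemma \ref{l.autos}. By Theorem \ref{t.basic}, the split-type $\Inn^*(B)$-orbits on $\msa(Q)$ for $B = kQ$ with $Q$ of type $\mathbb{A}$ are parameterized by the arrows of $Q$ (since each space $ukQ_1v$ is at most one-dimensional), so I replace $A$ by a canonical representative $A = A_i$. The hypotheses of connectedness and non-hereditarity, together with Proposition \ref{c.pathalg}, force the local orientation word $w_{\operatorname{pred}(i)}w_iw_{\operatorname{succ}(i)}$ to be $(+1)(+1)(+1)$ or $(-1)(-1)(-1)$, so the Ext quiver $\Gamma_i$ of $A_i$ contains a commutative square on the four consecutive vertices at positions $i-1, i, i+1, i+2$. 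Since the Ext quiver is determined up to isomorphism by the abstract algebra, any $A' \in \Iso(A, B)$ has the form $A' = A_j$ for some $j$ satisfying an analogous orientation condition. The task reduces to showing that $A_i \cong A_j$ forces $i = j$, or else $j = -i - 1$ together with $\Aut(Q) = C_2$.

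Applying Lemma \ref{l.autos}, I obtain an isomorphism $\psi : A_i \to A_j$ with $\psi(Q_0) = Q_0$, which induces a quiver isomorphism $\Gamma_i \to \Gamma_j$. By Proposition \ref{c.pathalg} the underlying graph of $\Gamma_i$ consists of a unique $4$-cycle (the commutative square) together with two pendant paths --- ``tails'' --- attached at its two corner vertices $v_{i-1}, v_{i+2}$. The unordered multiset of tail lengths is a graph invariant; in the generic case it equals $\{i + m - 1,\ m - i - 2\}$, and matching this multiset for $\Gamma_j$ forces either $i = j$ (and we are done) or $i + j = -1$. The same conclusion holds in the boundary cases $i \in \{-m+1, m-2\}$, where one corner of the diamond is a leaf of $Q$ and the tail-matching is even more restrictive.

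Suppose $j = -i - 1$. The induced graph isomorphism matches tails in reversed order, leaving only ambiguity in how the two interior diamond vertices $\{v_i, v_{i+1}\}$ map onto $\{v_{-i-1}, v_{-i}\}$. The evident involution of the commutative square that exchanges its two parallel paths yields a (non-inner) $k$-algebra automorphism $\tau$ of $A_j$ swapping $v_{-i}$ and $v_{-i-1}$ and fixing every other vertex idempotent; compatibility with the presentation of Proposition \ref{c.pathalg} is straightforward. After post-composing $\psi$ with $\tau$ if necessary, I may assume $\psi(\ell) = -\ell$ for all $\ell \in Q_0$, whereupon Lemma \ref{l.autos} yields $\Aut(Q) = C_2$. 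The nontrivial $\sigma \in \Aut(Q) \subset \Aut_k(B)$ satisfies $\sigma(A_i) = A_{-i-1} = A_j$, placing the two subalgebras in a common $\Aut_k(B)$-orbit.

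The main obstacle I expect is the explicit construction of $\tau$: one must verify that the defining commutativity relation $\overline{\beta}\gamma - \beta\underline{\gamma}$ supplied by Proposition \ref{c.pathalg} is preserved under the proposed swap of parallel paths. This is a short computation in the bound quiver presentation of $A_j$, but it is the step that ultimately bridges the graph-theoretic analysis of $\Gamma_i$ with the algebra isomorphism $\psi$. Secondary care is needed for the boundary cases and for the even-$n$ subcase (where the index $-i-1$ may fail to be valid, in which case the tail-matching already forces $j = i$ outright, sidestepping the reflection branch entirely).
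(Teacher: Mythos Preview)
Your proposal is correct and follows essentially the same strategy as the paper's proof: both use tail-length matching on the Ext quiver to force $j \in \{i,\ \operatorname{pred}(-i)\}$, and then invoke Lemma~\ref{l.autos} in the case $A_i \cong A_{\operatorname{pred}(-i)}$ to conclude $\Aut(Q) = C_2$, so that the nontrivial quiver automorphism carries $A_i$ to $A_{\operatorname{pred}(-i)}$. Your explicit construction of the diamond-swapping automorphism $\tau$ of $A_j$ (needed to arrange $\psi(\ell) = -\ell$ on the two interior square vertices) is a detail the paper simply asserts with the phrase ``This forces $\psi(\ell) = -\ell$ for all $\ell \in Q_0$,'' so your version is in fact more careful on precisely the point you flagged as the main obstacle.
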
 

\begin{proof} 
Suppose first $-m \le i \le -1$ and that $A_i$ is not hereditary. Then $i>-m$ and $w_{\operatorname{pred}(i)}w_iw_{\operatorname{succ}(i)} = (+1)(+1)(+1)$ or $(-1)(-1)(-1)$. By inspecting the full sub-quiver from $-m$ to $\operatorname{pred}(i)$ and the full sub-quiver from $\operatorname{succ}^2(i)$ to $m$, we conclude that $A_i \cong A_j$ implies either $j = i$ or $j = -i-1$. If $A_i \cong A_{-i-1}$, use Lemma \ref{l.autos} to find an isomorphism $\psi : A_i \rightarrow A_{-i-1}$ such that $\psi(Q_0) = Q_0$ and $\dim_k(uJ(A_i)/J(A_i)^2v) = \dim_k(\psi(u)J(A_{-i-1})/J(A_{-i-1})^2\psi(v))$ for all $u,v \in Q_0$. In other words, we may assume without loss of generality that $\psi$ induces an automorphism of the underlying quivers of $A_i$ and $A_{-i-1}$. But then we must have $\psi (m) = -m$, $\psi (\operatorname{pred}(i)) = \operatorname{succ}(-i)$, and $\psi(\operatorname{succ}^2(i)) = \operatorname{pred}(-i-1)$. This forces $\psi (\ell) = -\ell$ for all $\ell \in Q_0$, so that again by Lemma \ref{l.autos} we have $\Aut (Q) = C_2$. In this case, the non-identity automorphism of $Q$ sends $A_i$ to $A_{-i-1}$. Therefore, $\Aut_k(B)\cdot A_i = \Aut_k(B)\cdot A_{-i-1}$ and so $\Iso (A_i,B) = \Aut_k(B)\cdot A_i$. Otherwise $A_i\not\cong A_{-i-1}$, and again $\Iso (A_i,B) = \Aut_k(B)\cdot A_i$. The $i>-1$ case follows from replacing $\underline{w}$ with $\underline{w}^*$ and repeating the argument above. 
\end{proof} 

 We now prove our second case, where $A_i$ is hereditary but does not contain a trivalent vertex. We note again that this forces $i \in \{-m+1, m-2\}$.

\begin{lemma}\label{l.splitnotri} 
Let $Q$ and $B$ be as before, and let $A = A_{-m+1}$ or $A_{m-2}$. Suppose that $A$ is hereditary, that the Ext quiver of $A$ is connected, and that it does not contain a trivalent vertex. Then $\Iso (A,B) = \Aut_k(B)\cdot A$.
\end{lemma}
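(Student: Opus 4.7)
My plan is to parallel the proof of Lemma~\ref{l.splitnonher}: I will first narrow the possible partners of $A$ in $\Iso(A,B)$ via Ext quivers, and then apply Lemma~\ref{l.autos}. By the symmetry between $A_{-m+1}$ and $A_{m-2}$, it suffices to treat $A=A_{-m+1}$. The hereditary-no-trivalent hypothesis then forces $(w_{-m},w_{-m+1},w_{-m+2})\in\{(+,+,-),(-,-,+)\}$, so that the Ext quiver of $A_{-m+1}$ is a type $\mathbb{A}_n$ Dynkin quiver on the vertex set $Q_0$.

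First I would eliminate almost all candidates $A'\in\Iso(A,B)$. A separable-type maximal subalgebra has an Ext quiver obtained from $Q$ by identifying two vertices (Proposition~\ref{p.seppres}), producing either a loop (when the two vertices are adjacent) or an undirected cycle (otherwise); neither is type $\mathbb{A}_n$. Non-hereditary split-type subalgebras contain a commutative square, and hereditary split-type subalgebras whose Ext quiver has a trivalent vertex have a non-path underlying graph; both are excluded. Hence $A'=A_j$ with $j\in\{-m+1,m-2\}$.

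The case $j=-m+1$ is immediate. Assume $A_{-m+1}\cong A_{m-2}$. Using Proposition~\ref{c.pathalg}, I would compute the orientation word $\underline{w}'$ of the Ext quiver of $A_{-m+1}$ along the linearization $v_{-m+1},v_{-m},v_{-m+2},v_{-m+3},\ldots,v_m$, and similarly $\underline{w}''$ of $A_{m-2}$ along $v_{-m},\ldots,v_{m-3},v_{m-2},v_m,v_{m-1}$. A short check shows that the first three entries of $\underline{w}'$ are either $(-,+,-)$ or $(+,-,+)$, and symmetrically the last three entries of $\underline{w}''$ are either $(-,+,-)$ or $(+,-,+)$. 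Two type $\mathbb{A}_n$ path algebras are isomorphic iff their orientation words either agree or are reverse-negates of each other; the ``agree'' alternative contradicts the triple constraints at the boundary, while the ``reverse-negate'' alternative unfolds entry-by-entry into $w_j=-w_{-j-1}$ for every $j$. This is precisely $\underline{w}=\underline{w}^*$, so $\Aut(Q)=C_2$. The non-trivial automorphism of $Q$ then sends $\alpha_{-m+1}$ to $\alpha_{m-2}$, hence $A_{-m+1}$ to $A_{m-2}$, placing them in a single $\Aut_k(B)$-orbit, so $\Iso(A,B)=\Aut_k(B)\cdot A$.

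The main obstacle is the orientation bookkeeping in the last step: computing $\underline{w}'$ and $\underline{w}''$ correctly from Proposition~\ref{c.pathalg}, and then verifying that reverse-negate equality of these two words does force $w_j=-w_{-j-1}$ at both the ``boundary'' indices (which are already partly pinned by the hereditary-no-trivalent hypothesis) and at the ``interior'' indices (where it encodes a genuine constraint on $Q$). As a consistency check, Lemma~\ref{l.autos} applies: the bijection $\psi:Q_0\to Q_0$ induced by the Ext quiver isomorphism is forced to send $\ell\mapsto -\ell$, and the lemma then recovers $\Aut(Q)=C_2$ directly.
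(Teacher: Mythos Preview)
Your approach is essentially the same as the paper's. Both arguments first observe that any $A'\in\Iso(A,B)$ must be split-type, hereditary, connected, and have a type~$\mathbb{A}$ Ext quiver, forcing $A'\in\{A_{-m+1},A_{m-2}\}$; both then analyze the two possible leaf-to-leaf correspondences between the Ext quivers of $A_{-m+1}$ and $A_{m-2}$. The paper rules out the correspondence $v_{-m+1}\mapsto v_{-m}$ by directly comparing the full subquivers on $\{-m,-m+1,-m+2,-m+3\}$, whereas you rule it out by noting that the first entry of $\underline{w}'$ is $-w_{-m}$ while the first entry of $\underline{w}''$ is $w_{-m}$; these are equivalent observations. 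For the remaining correspondence, both arguments recognize that $\psi(\ell)=-\ell$ and invoke Lemma~\ref{l.autos} to conclude $\Aut(Q)=C_2$. The paper handles $m\le 3$ by separate direct computation, which you omit; your first-entry contradiction does not literally require $m>3$, but you should still check that the linearizations and the three-entry patterns make sense when the ``boundary regions'' of the two Ext quivers overlap (e.g.\ when $A_{-m+1}$ and $A_{m-2}$ coincide or nearly coincide).
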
 

\begin{proof} 
It suffices to prove the claim for $A_{-m+1}$. Without loss of generality, we may assume that $w_{-m+1} = +1$. For $1 \le m\le 3$ the claim can be verified through straightforward, but tedious computations. So we will assume $m>3$ throughout. By hypothesis, we must have $w_{-m}w_{-m+1}w_{-m+2} = (+1)(+1)(-1)$. From Proposition \ref{c.pathalg}, the Ext quiver of $A_{-m+1}$ is a type $\mathbb{A}$ Dynkin quiver, whose valence-$1$ vertices are $-m+1$ and $m$. The binary word associated to this quiver, choosing $-m+1$ to be the left-most vertex, is then simply $\underline{u} = (-1)(+1)(-1)w_{-m+3}\cdots w_{m-1}$. The only other $i$ for which $A_i$ can have a type $\mathbb{A}$ Dynkin quiver as its Ext quiver is $i = m-2$. Therefore, we have an inclusion $\Iso (A_{-m+1},B) \subset \Inn (B)\cdot A_{-m+1} \cup \Inn (B)\cdot A_{m-2}$. We claim that this inclusion is an equality if and only if $\Aut (Q) = C_2$. If it is not an equality then $A_{-m+1}\not\cong A_{m-2}$, and so in particular $\Aut (Q) = 1$. But then $\Iso (A,B) = \Inn (B)\cdot A = \Aut_k(B)\cdot A$, and the claim of the lemma is true in this case. So, suppose instead that $A_{m-2} \cong A$. Then $-m$ and $m-1$ are the leaves in the quiver of $A_{m-2}$. Comparing the full subquivers of $A_{m-2}$ and $A$ on $\{-m, -m+1, -m+2, -m+3\}$, we find that no isomorphism $A \rightarrow A_{m-2}$ can carry $-m+1$ to $-m$. But then this implies that we can find an isomorphism $\psi : A \rightarrow A_{m-2}$ which permutes $Q_0$ and carries $-m+1$ to $m-1$. This forces $\psi(j) = -j$ for all $j \in Q_0$. By Lemma \ref{l.autos} we have $\Aut (Q) = C_2$. In this case the non-identity element of $\Aut (Q)$, carries $A_{-m+1}$ to $A_{m-2}$, and so $\Iso (A,B) = \Aut_k(B)\cdot A$ in this case as well.
\end{proof} 

Now it only remains to prove the hereditary trivalent case. We break the proof into the cases where $|Q_0|$ is even or odd. For the next lemma, let $i$ be chosen such that $-m \le i \le m-1$. Then according to Figure \ref{f.quiverform}, exactly one of the dotted arrows must represent an arrow in the quiver of $A_i$. We refer to $A_i$ as $L_i$ if the arrow from $\operatorname{pred}(i)$ to $\operatorname{succ}(i)$ is present, and $R_i$ if the arrow from $i$ to $\operatorname{succ}^2(i)$ is present. For brevity, we write $A_i = L_i$ if the former holds, and $A_i = R_i$ if the latter holds. In either case, there is a unique trivalent vertex in the quiver of $A_i$, and a unique univalent vertex adjacent to it. We refer to this univalent vertex as the \emph{root} of the quiver. We call the smallest connected full subquiver containing this univalent vertex and $-m$ as the \emph{left path}. Similarly, the smallest connected full subquiver containing the univalent vertex and $m$ is called the \emph{right path}. Note that the left path and right path are just type $\mathbb{A}$ Dynkin quivers. The \emph{length} of the left/right path is just the number of arrows in it. 

\begin{lemma}\label{l.splittri} 
Let $G$ and $B$ be as before. Let $A$ be a hereditary maximal subalgebra of split type, whose Ext quiver is connected and contains a trivalent vertex. Then $\Iso (A,B) = \Aut_k(B)\cdot A$.
\end{lemma}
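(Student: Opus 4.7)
The plan follows the blueprint of Lemmas \ref{l.splitnonher} and \ref{l.splitnotri}. Given any isomorphism $A_i \to A_j$, I would first apply Lemma \ref{l.autos} to upgrade it to an isomorphism $\psi$ satisfying $\psi(Q_0) = Q_0$, so that $\psi$ induces an isomorphism of Ext quivers. By Proposition \ref{c.pathalg} and the analysis leading to Figure \ref{f.quiverform}, the Ext quiver of $A_i$ is a tree with a unique trivalent vertex, a unique leaf adjacent to it (the root), and two longer branches --- the left and right paths --- whose lengths and arrow orientations are determined by the sign word $\underline{w}$ together with the type ($L_i$ or $R_i$) of $A_i$. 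The map $\psi$ must preserve these data: it sends trivalent vertex to trivalent vertex, root to root, and matches the two long branches, possibly swapping them when their lengths coincide.

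I would then use a length count along the long branches to restrict $j$ to an explicit short list of candidates, classified by whether $A_j$ is of type $L$ or $R$ and by which long branch maps to which. For each candidate with $j \neq i$, comparing the orientations of corresponding arrows along each branch should force $\psi$ to act on vertex labels as the reflection $\ell \mapsto -\ell$. Lemma \ref{l.autos} then gives $\Aut(Q) = C_2$, and the non-identity element of $\Aut(Q)$ explicitly carries $A_i$ to $A_j$, so $\Aut_k(B) \cdot A_i = \Aut_k(B) \cdot A_j$. In any remaining case $A_j \not\cong A_i$ for $j \neq i$, and $\Iso(A_i, B) = \Inn^*(B) \cdot A_i$ already sits inside a single $\Aut_k(B)$-orbit.

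The main obstacle I expect is the combinatorial casework, particularly when comparing an $L$-type quiver against an $R$-type candidate whose long branch lengths happen to agree. There, path-length data alone does not distinguish the two, and one must examine arrow orientations along corresponding branches to force the self-symmetry $\underline{w}^* = \underline{w}$ (mirroring the endgame of the proof of Lemma \ref{l.splitnonher}). The parity split on $|Q_0|$ foreshadowed in the paragraph preceding the lemma is natural here, because the reflection $\ell \mapsto -\ell$ behaves differently depending on whether $Q$ has a central vertex ($|Q_0|$ odd) or a central edge ($|Q_0|$ even), which in turn constrains how the trivalent vertex, the root, and the two long branches can be arranged symmetrically within $Q$.
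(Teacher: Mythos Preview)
Your plan matches the paper's approach: restrict $j$ via left/right path-length counts, then analyse arrow orientations along the branches, invoking Lemma~\ref{l.autos} when the reflection $\ell \mapsto -\ell$ is forced. One point needs sharpening. It is not the case that \emph{every} candidate surviving the length count forces $\psi$ to be the reflection. The paper's casework leaves several ``off-centre'' candidates --- for instance $L_i$ versus $L_{-i+1}$ and $R_i$ versus $R_{-i-3}$ in the odd case, or $R_{-1}$ versus $R_{-3}$ and $R_{-1}$ versus $L_2$ in the even case --- where the putative $\psi$ would send the root of $A_i$ to a vertex other than the reflected one. These are dispatched by a direct contradiction (a source/sink mismatch at the root, or an inconsistency between the branch binary words), not by Lemma~\ref{l.autos}. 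Only the symmetric candidate $j = \operatorname{pred}(-i)$ genuinely forces $\psi(\ell) = -\ell$ on all of $Q_0$; in the even case this is itself a contradiction (since $\underline{w}^* = \underline{w}$ would force $\underline{w}(-1) = -\underline{w}(-1)$ at the central edge, so $\Aut(Q) = 1$ always when $n$ is even), while in the odd case it yields $\Aut(Q) = C_2$ and the orbit equality you describe.
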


%\begin{lemma}\label{l.spliteven} 
%Let $Q$ and $B$ be as before, and suppose that $n = |Q_0|$ is even. Let $A$ be a hereditary maximal subalgebra of split type, whose Ext quiver is connected. Then $\Iso (A,B) = \Aut_k(B)\cdot A$. \end{lemma}  

\begin{proof}  
\emph{Case 1:} Suppose that $n$ is even. Then in particular, $\operatorname{succ}(-1) = 1$. We start by showing that for any $i$ and $j$ with $-m \le i , j \le -1$ and $i \neq j$, $A_i \not\cong A_j$. To see this, first note that for such $i$, the length of the left path in $L_i$ is $i+m$, and the length of the right path is $m-i-1$. Similarly, length of the left path in $R_i$ is $m+i+2$, and the length of the right path is $m+i-2$. In particular, the difference between the lengths of the left and right paths in $L_i$ is odd, whereas the difference between the left and right paths in $R_i$ is even. It follows that if either $A_i = L_i$ and $A_j = R_j$, or $A_i = R_i$ and $A_j = R_j$, then $A_i \not\cong A_j$. Furthermore, $L_i \not\cong L_j$, since for $i \neq j$, the corresponding sets of left and right path lengths are disjoint. It follows that if $A_i \cong A_j$ for $i,j \le -1$, then necessarily $A_i = R_i$ and $A_j = R_j$. In fact, by inspection of path lengths, the only possibility is $m \geq 3$ and $\{ i,j\} = \{ -1,-3\}$. Without loss of generality, we may assume that $w_{-1} = (+1)$. Then necessarily $w_{-2}w_{-1}w_1 = (-1)(+1)(+1)$, so that $1$ is the root of $A_{-1}$ and it is a source in the quiver of $A_{-1}$. Since $A_{-3} = R_{-3}$ and $w_{-2} = -1$, we must also have $w_{-3} = -1$. But then $-2$ is the root of $A_{-3}$, and it is a sink in its quiver. This is a contradiction, and so $A_{-1}\not\cong A_{-3}$. 

The above argument implies that if $-m \le i \le -1$ and $j$ is chosen such that $A_i \cong A_j$, then $j \geq 1$. Note that if $j \geq 1$, then the lengths of the left and right paths of $L_j$ have an even difference, whereas they have an odd difference in $R_j$. Comparing path lengths, we find the following: 
\begin{enumerate} 
\item If $i \in \{ -1,-3\}$ and $A_i = R_i$, then $A_i \cong A_j$ implies $A_j = L_j$ and $j \in \{ -1,2\}$.
\item If $i \not\in \{ -1,-3\}$ and $A_i = R_i$, then $A_i \cong A_j$ implies $A_j = L_{-i-1}$. 
\item If $A_i = L_i$, then $A_i \cong A_j$ implies $A_j = R_{\operatorname{pred}(-i)}$.  
\end{enumerate} 
We have already shown that $A_{-1}\not\cong A_{-3}$. Similar computations show that if the root of $R_{-1}$ (resp. $R_{-2}$) is a source, then the root of $L_2$ (resp. $L_1$) is a sink, and vice-versa. We conclude $R_{-1} \not\cong L_2$ and $R_{-2}\not\cong L_1$. The remaining cases from statements (1)-(3) can be rephrased as follows: for all $i$, either $\Iso (A_i,B) = \Inn (B)\cdot A_i$ or $\Iso (A_i,B) = \Inn (B)\cdot A_i \cup \Inn (B)\cdot A_{\operatorname{pred}(-i)}$. If $\Iso (A_i,B) = \Inn (B)\cdot A_i$, there is nothing to show. So suppose $i \neq \operatorname{pred}(-i)$ and $A_i \cong A_{\operatorname{pred}(-i)}$, for some negative integer $i$. In particular $i \neq -1$, and if $i = -2$ we may assume $A_{-2} = L_{-2}$. Note that this implies that the left path of $A_i$ must have a shorter length than the right path, whereas the left path of $A_{-i-1}$ must have a longer length that its right path. Since any isomorphism $A_i \rightarrow A_{-i-1}$ permuting $Q_0$ must send the trivalent vertex (resp. root) of $A_i$ to the trivalent vertex (resp. root) of $A_{-i-1}$, it follows that such an isomorphism satisfies $j \mapsto -j$ for all $j \in Q_0$. By Lemma \ref{l.autos} we have $\Aut (Q) = C_2$, a contradiction (since $n$ is even). Hence $\Iso (A_i,B) = \Inn(B)\cdot A_i$, as we wished to show.

%We now prove the odd case of Lemma \ref{l.splittri}:

%\begin{lemma}\label{l.splitodd} 
%Let $Q$ and $B$ be as before, and suppose that $n = |Q_0|$ is odd. Let $A$ be a hereditary maximal subalgebra of split type, whose Ext quiver is connected. Then $\Iso (A,B) = \Aut_k(B)\cdot A$.
%\end{lemma} 

\emph{Case 2:}  Suppose that $n$ is odd. Then $Q_0$ is just the interval $[-m,m]$ of $\ZZ$. Let $i$ be chosen such that $-m \le i \le -1$. The the left path of $L_i$ has length $i+m$, and the right path has length $m-i+1$. The left path of $R_i$ has length $m+i+2$, and the right path has length $m-i-1$. As before, we want to start by showing that if $j$ is any number between $-m$ and $-1$ with $i \neq j$, then $A_i \not\cong A_j$. Suppose that $A_i = L_i$. Then by comparing path sizes, we see that if $A_i \cong A_j$, then $A_j = R_{i-2}$. Without loss of generality, we may assume $w_i = +1$. Then $w_{i-1}w_iw_{i+1} = (+1)(+1)(-1)$. But since $A_{i-2} = R_{i-2}$, we must have $w_{i-2} = +1$ as well. But then the root of $A_i$ is $i$, which is a sink, whereas the root of $A_{i-2}$ is $i-1$, which is a source. This is a contradiction, and so $A_i \not\cong A_j$ if $A_i = L_i$. Otherwise $A_i = R_i$. If $A_i \cong A_j$ and $A_j = L_j$ we are in the previous case, so assume $A_j = R_j$ as well. By comparing path lengths, we find that the only possibility is $\{ i,j\} = \{ -1,-2\}$. Suppose without loss of generality that $w_{-1} = +1$. Then since $A_{-1} = R_{-1}$ and is connected hereditary, we must have $w_{-2}w_{-1}w_{0} = (-1)(+1)(+1)$. But then $A_{-2} \neq R_{-2}$, since $A_{-2} = R_{-2}$ requires $w_{-2}$ and $w_{-1}$ to have the same sign. Hence $R_{-1} \not\cong R_{-2}$, and it follows that for all $-m \le i,j \le -1$, $A_i \not\cong A_j$. 

Now we show that if $A_i \cong A_j$ for $j\geq 0$, then $j = {-i-1}$ and $\Aut (Q) = C_2$. Suppose first that $A_i = L_i$. Then the other algebras $A_j$ for $j\geq 0$ which have the same set of path lengths as $A_i$ are $R_{-i-1}$ and $L_{-i+1}$. We first show that $L_i \not\cong L_{-i+1}$. Without loss of generality, we let $w_i = +1$. The root of $L_i$ is $i$, and the right path has a larger length than the left path. Treating the right path as a type $\mathbb{A}$ Dynkin quiver starting at $i$, its associated binary word is $(-1)(+1)w_{i+1}\cdots w_{m-1}$. Now, the larger path in $L_{-i+1}$ is the left path, and its root is vertex $-i+1$. Since the root of $L_i$ is a sink, $-i+1$ must be a sink too. This forces $w_{-i}w_{-i+1}w_{-i+2} = (+1)(+1)(-1)$. Treating the left path of $L_{-i+1}$ as a type $\mathbb{A}$ Dynkin quiver starting at $-i+1$, its associated binary word is $(-1)(-w_{-i-1})\cdots (-w_{-m})$. If $L_i \cong L_{-i+1}$, then we must have $(-1)(+1)w_{i+1}\cdots w_{m-1} = (-1)(-w_{-i-1})\cdots (-w_{-m})$. In particular, $w_{-i-1} = -1$ and for all $i+1 \le j \le m-1$, we have $w_j = -w_{-(j+1)}$. But then setting $j = -i$ we find $+1= w_{-i} = -w_{-(-i+1)} = -w_{i-1} = -(+1) = -1$, a contradiction. So $L_i\not\cong L_{-i+1}$, as we wished to show. Hence, we assume $L_i \cong R_{-i-1}$. Since the root of $L_i$ is a sink, it must be a sink in $R_{-i-1}$ as well. This implies that if we start at the root, the longer path in $R_{-i-1}$ has binary word $(-1)(+1)(-1)(-w_{-i-2})\cdots w_{-m}$. Therefore, we have $(-1)(+1)(-w_{-i-2})\cdots (-w_{-m}) = (-1)(+1)(w_{i+1})\cdots w_{m-1}$. This implies that $w_j = -w_{-(j+1)}$ for $i+1 \le j \le m-1$. Similarly, the binary word for the short path in $R_{-i-1}$ is $w_{-i}w_{-i+1}\cdots w_{m-1}$, whereas it is $(-w_{i-1})(-w_{i-2})\cdots (-w_{-m})$ for $L_i$. Hence, $w_j = -w_{-(j+1)}$ for $-m \le j \le i-1$. But $w_i= +1$ by hypothesis, and $w_{-i-1} = -1$ since $R_{-i-1}$ is a subalgebra of $B$ isomorphic to $L_i$. In other words, $\underline{w}^* = \underline{w}$, and $\Aut (Q) = C_2$. 

This shows that if $A_i \cong A_j$ for $j \geq 0$, then $j = -i-1$ and $\Aut (Q) = C_2$ for the $A_i = L_i$ case. Note that by replacing $\underline{w}$ by $\underline{w}^*$ if necessary, it only remains to consider the case when $A_i = R_i$ and $A_j = R_j$. By comparison of path lengths, $A_j = R_{-i-3}$. Note that we may assume $i \le -3$, for otherwise this reduces to a case that has been previously discussed. Comparing the binary words for the longer paths in $R_i$ and $R_{-i-3}$, we see $w_{i+1}\cdots w_{m-1} = (+1)(-1)(-w_{-i-4})\cdots (-w_{-m})$. Therefore, $w_j = -w_{-(j+1)}$ for $i+3 \le j \le m-1$. Since $i \le -3$, $i+3 \le 0$ and so this suffices to show that $\underline{w}^* = \underline{w}$. But since $w_i = +1$ and $A_i = R_i$, $w_{i+1} = +1$. Since we also assume $A_{-i-3} = R_{-i-3} \cong R_i$, comparison of roots yields $w_{-i-2} = +1$. But then $w_{i+1} \neq -w_{-(i+1+1)}$, and so $\underline{w}^* \neq \underline{w}$. This contradiction shows that $R_i \not\cong R_{-i-3}$. 

Putting this all together, we see that if $A_i \cong A_j$, then $j = -i-1$ and $\Aut (Q) = C_2$. But in this case $A_i$ and $A_{-i-1}$ lie in the same $\Aut_k(B)$-orbit, and so the lemma is proved.
\end{proof} 

Lemmas \ref{l.splitnotri}, \ref{l.splitnonher}, and \ref{l.splittri} combine to prove the following proposition: 

\begin{proposition}\label{p.splitorbit} 
Let $Q$ and $B$ be as before. If $A \subset B$ is a maximal subalgebra of split type, and the Ext quiver of $A$ is connected, then $\Iso (A,B) = \Aut_k(B)\cdot A$.
\end{proposition}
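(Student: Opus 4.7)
The plan is to observe that by Theorem \ref{t.basic}, any maximal subalgebra of split type in $B = kQ$ is $\Inn^*(B)$-conjugate to some $A_i = A(v_i, \operatorname{succ}(v_i), \{0\})$ (since when $I = \{0\}$ the only codimension-$1$ subspace $U \le ukQ_1v$ is $\{0\}$, because $\dim_kukQ_1v \le 1$ for type $\mathbb{A}$). Since $\Inn^*(B) \subset \Aut_k(B)$, it suffices to prove the statement for the representatives $A = A_i$. Then Proposition \ref{c.pathalg} tells us the Ext quiver of $A_i$ looks like Figure \ref{f.quiverform}, with possibly one or two additional arrows between $\operatorname{pred}(i)$ and $\operatorname{succ}(i)$, and between $i$ and $\operatorname{succ}^2(i)$.

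I would then split into the three cases according to the local orientation pattern $w_{\operatorname{pred}(i)}w_iw_{\operatorname{succ}(i)}$ around the vertex $i$, which by Proposition \ref{c.pathalg} determines the structure of the Ext quiver of $A_i$:

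\begin{enumerate}
\item If $w_{\operatorname{pred}(i)}w_iw_{\operatorname{succ}(i)} \in \{(+1)(+1)(+1), (-1)(-1)(-1)\}$, then $A_i$ is non-hereditary (its Ext quiver contains a commutative square), and Lemma \ref{l.splitnonher} applies to give $\Iso(A_i,B) = \Aut_k(B)\cdot A_i$.
\item Otherwise $A_i$ is hereditary. If additionally $i \in \{-m+1, m-2\}$ and the boundary pattern forces the Ext quiver to be a type $\mathbb{A}$ Dynkin quiver (no trivalent vertex), then Lemma \ref{l.splitnotri} applies.
\item In all remaining hereditary cases, the Ext quiver of $A_i$ has exactly one trivalent vertex together with a univalent root adjacent to it, and Lemma \ref{l.splittri} applies.
\end{enumerate}

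The main task is to verify that these three cases exhaust all possible configurations for connected Ext quivers, which follows directly from Proposition \ref{c.pathalg} combined with the observation that the only extra arrows in $\Gamma$ beyond those of $Q$ are the possible arrows $\operatorname{pred}(i) \to \operatorname{succ}(i)$ and $i \to \operatorname{succ}^2(i)$ (by Corollary \ref{c.inherit}(3), $J(B)^4 \subset J(A)^2$, and in the hereditary type $\mathbb{A}$ setting no paths in $J(B)^3$ survive to be new arrows). The connectivity hypothesis rules out the case where both potential extra arrows are absent when $-m < i < m-1$. There is essentially no new work to do beyond invoking the three lemmas; the content of the proposition is the combination that covers every configuration of $A$ allowed by Theorem \ref{t.basic}. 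The only subtle point is confirming the dichotomy in case (2) between the Dynkin Ext quiver at the boundary and the presence of a trivalent vertex in all interior cases, which is an easy check from Figure \ref{f.quiverform}.
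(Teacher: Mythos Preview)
Your proposal is correct and follows essentially the same approach as the paper: the paper's proof is the single sentence ``Lemmas \ref{l.splitnotri}, \ref{l.splitnonher}, and \ref{l.splittri} combine to prove the following proposition,'' and the case analysis you outline is precisely the trichotomy set up in the paper's discussion preceding those lemmas. Your additional justification that the three cases are exhaustive (via Proposition~\ref{c.pathalg} and the connectivity hypothesis) simply makes explicit what the paper establishes in the paragraphs surrounding Figure~\ref{f.quiverform}.
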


To finish the proof of Theorem \ref{t.main}, we need to show that $\Iso (A_{i,j}, B) = \Aut_k(B)\cdot A_{i,j}$ for all $i$ and $j$. If $i<j$, then Proposition \ref{p.seppres} asserts that the underlying graph of the Ext quiver of $A_{i,j}$ looks as follows: \newline

\begin{figure}[th]
\[
\vcenter{\hbox{ 
 \begin{tikzpicture}[point/.style={shape=circle,fill=black,scale=.3pt,outer sep=3pt},>=latex, baseline=-3,scale=1.5]   
 \node[point,label={below:$-m$}] (-m) at (0,0) {}; 
 \node (b1) at (1,0) {$\cdots$};
 \node[point,label={below:$\operatorname{pred}(i)$}] (i-1) at (2,0) {}; 
 \node[point,label={below:$i+j$}] (i+j) at (3,0) {};   
 \node (b2) at (2.5,0.5) {}; 
 \node (b3) at (3,1) {}; 
 \node (b4) at (3.5,0.5) {}; 
 \node[point,label={below:$\operatorname{succ}(j)$}] (j+1) at (4,0) {}; 
 \node (b5) at (5,0) {$\cdots$} ; 
 \node[point,label={below:$m$}] (m) at (6,0) {}; 
 
 \path (-m) edge (0.5,0) 
 (1.5,0) edge (i-1) 
 (i-1) edge (i+j) 
 (i+j) edge[bend left=25] (b2) 
 (i+j) edge[bend right=25] (b4) 
 (i+j) edge (j+1) 
 (j+1) edge (4.5,0) 
 (5.5,0) edge (m);
 
\path[dotted] (b2) edge[bend left=25] (b3) 
(b3) edge[bend left=25] (b4);
%\node[point,label={above:$v_3$}] (3) at (1,.5) {} ; 
%\node[point,label={below:$v_2$}] (2) at (1,-.5) {} ; 
%\node[point,label={right:$v_4$}] (4) at (2,0) {} ; 
%\path[->] (-m) edge node[below] {$\alpha$} (2) 
%(3) edge node[above] {$\gamma$} (4) 
%(2) edge node[below] {$\underline{\gamma}$} (4) 
%(2) edge node[left] {$\beta_2$} (3); 
\end{tikzpicture},
}}
\] 
\caption{}\label{f.sepform}
\end{figure}

\noindent where if $i = -m$ (resp. $j = m$) we delete the subgraph including $\operatorname{pred}(i)$ and all vertices to its left (resp. $\operatorname{succ}(j)$ and all vertices to its right). Examining the paths from $-m$ to $i+j$ and $i+j$ to $m$, we see $\Iso (A_{i,j},B) \subset \Aut_k(B)\cdot A_{i,j} \cup \Aut_k(B)\cdot A_{-i,-j}$. In particular, $\Iso (A_{i,j},B) = \Aut_k(B)\cdot A_{i,j}$ if $i=-j$, and so without loss of generality we may replace the $i<j$ assumption by the assumption that $|i|<|j|$ throughout (allowing now for the possibility that $i\geq j$). If $\Aut (Q) = C_2$, then the unique non-identity automorphism of $Q$ extends to an automorphism of $B$ which sends $v_i\mapsto v_{-i}$, $v_j \mapsto v_{-j}$. Therefore $\Iso (A_{i,j},B) = \Aut_k(B)\cdot A_{i,j}$ in this case as well, and so we may also assume $\Aut (Q) = 1$.

Under these assumptions, we will be done if we can show $A_{i,j} \not\cong A_{-i,-j}$ for the remaining cases. In other words, we must prove the following lemma: 

\begin{lemma}\label{l.evenodd} 
Under the assumptions above, $A_{i,j} \not\cong A_{-i,-j}$.
\end{lemma}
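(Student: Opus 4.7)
The plan is to argue by contradiction. Suppose $\psi' : A_{i,j} \to A_{-i,-j}$ is a $k$-algebra isomorphism. As in Lemma \ref{l.autos}, the vertex idempotents form complete sets of primitive orthogonal idempotents for each algebra, so composing $\psi'$ with an inner automorphism of $A_{-i,-j}$ yields an isomorphism $\psi : A_{i,j} \to A_{-i,-j}$ carrying the vertex idempotents of $A_{i,j}$ bijectively onto those of $A_{-i,-j}$. Thus $\psi$ descends to an isomorphism of the corresponding bound-quiver Ext quivers.

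By Proposition \ref{p.seppres} (see Figure \ref{f.sepform}), the Ext quiver of $A_{i,j}$ has a merged vertex $v_{i+j}$ with two attached tails of lengths $m+i$ and $m-j$ together with a cycle of length $j-i$. The merged vertex is always combinatorially distinguished: for $j - i \geq 3$ it is the unique vertex of valence $\geq 3$; for $j - i = 2$ it is distinguished from the lone cycle vertex by its attached tails; for $j - i = 1$ it is the unique vertex carrying a loop. (The only configuration with both tails empty requires $i = -m$ and $j = m$, which is excluded by $|i|<|j|$.) Therefore $\psi(v_{i+j}) = v_{-i-j}$. Moreover, $|i|<|j|$ forces $m+i \neq m-j$, so the two tails have distinct lengths; $\psi$ must then exchange the left tail of $A_{i,j}$ (length $m+i$) with the right tail of $A_{-i,-j}$ (also of length $m+i$), and similarly exchange the other pair. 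Matching endpoints on each tail forces $\psi(v_\ell) = v_{-\ell}$ for every $\ell$ in a tail.

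To extend this identification to the cycle, I would invoke the relations from Proposition \ref{p.seppres}(2). These partition the arrows incident to $v_{i+j}$ into a ``$v_i$-side'' and a ``$v_j$-side'' according to which glued idempotent each arrow originally involved in $Q$: length-$2$ paths through $v_{i+j}$ that cross between the two sides vanish in $A_{i,j}$, while those staying within a single side survive. Any algebra isomorphism preserves this partition, so $\psi$ either matches $v_i$-side to $v_{-i}$-side, or $v_i$-side to $v_{-j}$-side. The tail data already places $\psi(\operatorname{pred}(v_i)) = \operatorname{succ}(v_{-i})$, which is a $v_{-i}$-side neighbor of $v_{-i-j}$; this pins down the former option. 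It follows that $\psi$ sends the $v_i$-side cycle neighbor of $v_{i+j}$ to the $v_{-i}$-side cycle neighbor of $v_{-i-j}$, and tracing through the remaining cycle path yields $\psi(v_\ell) = v_{-\ell}$ for every cycle vertex as well.

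With $\psi(v_\ell) = v_{-\ell}$ on every vertex, each arrow $\alpha_\ell$ must map (up to scalar) to the arrow of $A_{-i,-j}$ between $v_{-\ell}$ and $v_{\operatorname{pred}(-\ell)}$, namely $\alpha_{\operatorname{pred}(-\ell)}$. Preservation of orientation then gives $\underline{w}(\ell) = -\underline{w}(\operatorname{pred}(-\ell)) = \underline{w}^*(\ell)$ for every $\ell$ in the domain of $\underline{w}$, since the tail and cycle indices jointly exhaust it. Hence $\underline{w} = \underline{w}^*$, meaning that $\ell \mapsto -\ell$ extends to an automorphism of the oriented quiver $Q$; this forces $\Aut(Q) = C_2$, contradicting the hypothesis $\Aut(Q) = 1$. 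The main obstacle is the cycle step: one must verify that the side-partition argument correctly handles the degenerate subcases $j = i+1$ (loop) and $j = i+2$ (doubled edge), where the side structure is encoded more subtly by the relations themselves rather than by distinct neighboring vertices.
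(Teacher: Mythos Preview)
Your argument has a genuine gap at the cycle step. The side-partition of the arrows at the merged vertex into a $v_i$-side and a $v_j$-side is only recoverable from the algebra structure when there exist length-$2$ paths through $v_{i+j}$ witnessing it: same-side composites are nonzero, cross-side composites vanish. But when $v_i$ and $v_j$ are both sinks in $Q$ (or both sources), the merged vertex is a sink (resp.\ source) in the Ext quiver of $A_{i,j}$, there are no length-$2$ paths through it at all, the relations of Proposition~\ref{p.seppres}(2) are vacuous, and $A_{i,j}$ is hereditary. In that situation the side-partition is simply not an invariant of the algebra, so an isomorphism $\psi$ need not carry the $v_i$-side of the cycle to the $v_{-i}$-side; a quiver isomorphism that traverses the cycle in the opposite sense lifts to a perfectly good algebra isomorphism. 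This is exactly the second alternative in the paper's condition (3), namely $\underline{w}_3\cdot\underline{w}_2^* = \underline{w}_2\cdot\underline{w}_3$, and the paper disposes of it only through the lengthy word-combinatorics in Cases~1 and~2 (even and odd). Your proposal skips that analysis entirely. Such sink/source configurations do occur under the standing hypotheses: for instance with $n=10$, $\underline{w}=(+1)(-1)(+1)(-1)(+1)(+1)(-1)(+1)(-1)$, and $(i,j)=(-2,4)$, one has $\Aut(Q)=1$ and all four of $v_{\pm 2}, v_{\pm 4}$ are sinks. The degenerate subcases you flag are not the real obstruction; in fact the loop case $j=\operatorname{succ}(i)$ never arises once $|i|<|j|$ is imposed.

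That said, when $v_{i+j}$ is neither a sink nor a source your idea does work and is sharper than the paper's: in every such configuration the side-partition is intrinsically detectable (an incoming arrow lies on a given side iff it composes nontrivially with some outgoing arrow, and symmetrically), so $\psi$ must respect it, the tail-matching pins $\psi(v_\ell)=v_{-\ell}$ on all vertices, and you get $\underline{w}=\underline{w}^*$ directly. The paper, using only the underlying quiver isomorphism, cannot see the relations and must therefore treat both cycle orientations in all cases. To complete your approach you would still need the paper's word analysis for the hereditary (sink/source) case.
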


 \noindent Again we break this up into two arguments, depending on whether $n$ is even or odd. First suppose that $Q$ has $n=2m$ vertices. Note that if $i,j \le -1$ or $i,j \geq 1$, then $A_{i,j} \not\cong A_{-i,-j}$ by examination of the edge between $v_{-1}$ and $v_1$ in the quivers of $A_{i,j}$ and $A_{-i,-j}$. Therefore, $\Iso (A_{i,j},B) = \Inn(B)\cdot A_{i,j}$ in this case. Hence, we may assume without loss of generality that $-m \le i \le -1 < 1 \le j \le m$, so that combined with our reductions above, we have $1 \le -i < j \le m$. Write $\underline{w} = \underline{w}_1\cdot\underline{w}_2\cdot\underline{w}_3\cdot\underline{w}_4\cdot\underline{w}_5$, where the $\underline{w}_i$ are defined as follows (see Figure \ref{f.wdef}):  
 
 \begin{enumerate} 
 \item $\underline{w}_1$ is the binary word from $v_{-m}$ to $v_{-j}$, 
 \item $\underline{w}_2$ is the binary word from $v_{-j}$ to $v_i$, 
 \item $\underline{w}_3$ is the binary word from $v_i$ to $v_{-i}$, 
 \item $\underline{w}_4$ is the binary word from $v_{-i}$ to $v_j$, and 
 \item $\underline{w}_5$ is the binary word from $v_j$ to $v_m$.
 \end{enumerate}   
 
 \begin{figure}[th]
\[
\vcenter{\hbox{ 
 \begin{tikzpicture}[point/.style={shape=circle,fill=black,scale=.3pt,outer sep=3pt},>=latex, baseline=-3,scale=1.5]   
 \node[point,label={below:$-m$}] (-m) at (0,0) {}; 
 \node (b1) at (1,0) {$\cdots$}; 
 \node (w1) at (1,-0.25) {$\underline{w}_1$};
 \node[point,label={below:$-j$}] (i-1) at (2,0) {};  
 \node (w2) at (2.5,-0.25) {$\underline{w}_2$};
 \node[point,label={below:$i+j$}] (i+j) at (3,0) {};   
 \node (b2) at (2.5,0.5) {};  
 \node (w3) at (2.25,0.5) {$\underline{w}_3$};
 \node[point,label={above:$-i$}] (b3) at (3,1) {}; 
 \node (b4) at (3.5,0.5) {};  
 \node (w4) at (3.75,0.5) {$\underline{w}_4$};
 \node (b5) at (4,0) {$\cdots$} ;  
 \node (w5) at (4,-0.25) {$\underline{w}_5$};
 \node[point,label={below:$m$}] (m) at (5,0) {}; 
 
 \path (-m) edge (b1) 
 (b1) edge (i-1) 
 (i-1) edge (i+j) 
 (i+j) edge[bend left=25] (b2) 
 (i+j) edge[bend right=25] (b4) 
 (i+j) edge (b5) 
 (b5) edge (m);
 
\path[dotted] (b2) edge[bend left=25] (b3) 
(b3) edge[bend left=25] (b4);
%\node[point,label={above:$v_3$}] (3) at (1,.5) {} ; 
%\node[point,label={below:$v_2$}] (2) at (1,-.5) {} ; 
%\node[point,label={right:$v_4$}] (4) at (2,0) {} ; 
%\path[->] (-m) edge node[below] {$\alpha$} (2) 
%(3) edge node[above] {$\gamma$} (4) 
%(2) edge node[below] {$\underline{\gamma}$} (4) 
%(2) edge node[left] {$\beta_2$} (3); 
\end{tikzpicture},
}}
\] 
\caption{}\label{f.wdef}
\end{figure}

\newpage
 
% \noindent  We can now prove the following:

%\begin{lemma}\label{l.even}
%Let $n = 2m$ be even, and let $i$ and $j$ be as above. Let $\underline{w}_{\ell}$ be as above, for $1 \le \ell \le 5$. Then $A(i+j) \not\cong A(-i-j)$. 
%\end{lemma}  

\begin{proofEven}  
Suppose there was an isomorphism $A_{i,j} \cong A_{-i,-j}$ by way of contradiction. Then the quivers of $A_{i,j}$ and $A_{-i,-j}$ must be isomorphic. Since $|i|<|j|$, this can only be true if the following three conditions hold: 
\begin{enumerate} 
\item $\underline{w}_1 = (\underline{w}_5)^*$,
\item $\underline{w}_2 = (\underline{w}_4)^*$, and 
\item either $\underline{w}_3\cdot\underline{w}_4 = \underline{w}_3^*\cdot\underline{w}_2^*$ or $\underline{w}_3\cdot \underline{w}_4 = (\underline{w}_3^*\cdot \underline{w}_2^*)^* = \underline{w}_2\cdot \underline{w}_3$.
\end{enumerate} 

\noindent If $\underline{w}_3\cdot \underline{w}_4 = \underline{w}_3^*\cdot \underline{w}_2^*$, then by applying condition (2) and cancelling $\underline{w}_4$ from both sides, $\underline{w}_3 = \underline{w}_3^*$. Then $\underline{w}^* = ( \underline{w}_1\cdot \underline{w}_2 \cdot \underline{w}_3 \cdot \underline{w}_4 \cdot \underline{w}_5)^* = \underline{w}_5^* \cdot \underline{w}_4^* \cdot \underline{w}_3^*\cdot \underline{w}_2^* \cdot \underline{w}_1^* =  \underline{w}_1\cdot \underline{w}_2 \cdot \underline{w}_3 \cdot \underline{w}_4 \cdot \underline{w}_5 = \underline{w}$, a contradiction. Hence, $\underline{w}_3\cdot \underline{w}_2^* = \underline{w}_2\cdot \underline{w}_3$.

\emph{Case 1:} Suppose that the length of $\underline{w}_2$ is less than or equal to the length of $\underline{w}_3$. Then since $\underline{w}_3\cdot \underline{w}_2^* = \underline{w}_2\cdot \underline{w}_3$, we may write $\underline{w}_2 = \e_1\cdots \e_d$, $\underline{w}_3= \e_1\cdots \e_d\e_{d+1}\cdots \e_{d+f}$, where each $\e_a \in \{ \pm 1\}$. Then  

\begin{equation}
(\e_1\cdots \e_d\e_{d+1}\cdots \e_{d+f})(-\e_d)\cdots (-\e_1)= (\e_1\cdots \e_d)(\e_1\cdots\e_d\e_{d+1}\cdots \e_{d+f}).  
\end{equation} 

Suppose that $d > f$. Then by comparing terms, we have $\e_{d+a} = \e_a$ for all $1 \le a \le f$ and $\e_{f+b} = -\e_{d-b+1}$ for all $1 \le b \le d$. Since $d+f$ is odd, $(d+f+1)/2$ is a natural number, and $f<d$ implies that $(d+f+1)/2 \le d$. Now, $d-b+1 = (d+f+1)/2$ precisely when $b = (d-f+1)/2$. Then $\e_{f+b} = -\e_{d-b+1}$ yields $\e_{(d+f+1)/2} = -\e_{(d+f+1)/2}$ for this value of $b$, a contradiction. Note that $d \neq f$ since $d+f$ is odd, and so the only remaining possibility is that $d< f$. After cancelling the $\e_1\cdots \e_d$ term from both sides, we find that 

\begin{equation} 
\e_{d+1}\cdots \e_{d+f}(-\e_d)\cdots (-\e_1) = (\e_1\cdots \e_f )(\e_{f+1} \cdots \e_{d+f}).
\end{equation}

\noindent For $1\le a \le f$, define $q = q(a)$ to be the largest non-negative integer such that $dq+a \le f$. Now, comparing terms on the left-hand-side and the right-hand-side of the above equation, we see $\e_a = \ldots  = \e_{dq+a} = \e_{d(q+1)+a}$. Since $d(q+1)+a > f$, it follows that $\e_{d(q+1)+a} = -\e_{d+f+1-(d(q+1)+a)} = -\e_{f-qd-a+1} = -\e_{f-(dq+a-1)}$.  Notice that $q = \lfloor \frac{f-a}{d} \rfloor$. If we can choose $a$ such that $a = f-(dq+a-1)$, we will have the desired contradiction. 

\emph{Sub-Case 1(a):} Say that $f$ is odd. Then $d$ is even, and for all $s$, $(f-ds+1)/2$ is an integer. Define $r$ to be the largest positive integer such that $dr+1 \le f$. Then $f < d(r+1)+1 < d(r+2)+1$. Let $a = (f-dr+1)/2$. It is clear that $a \le f$. The inequality $dr+1 \le f < d(r+2)+1$ can be re-arranged to say that $r \le \frac{f-\frac{f-rd+1}{2}}{d} < r+1$. In other words, $\lfloor \frac{f-a}{d} \rfloor = r$. But $a = f-dr-a+1 = f-d\lfloor \frac{f-a}{d} \rfloor -a + 1$ by definition, and we have found a choice of $a$ which works.  

\emph{Sub-Case 1(b):} Say that $f$ is even. Then $d$ is odd, and for all odd $s$, $(f-sd+1)/2$ is an integer. Let $r$ be the same as in Sub-Case 1(a). If $r$ is odd, then again $a = (f-rd+1)/2$ works. Otherwise $r$ is even, so that $r-1$ is odd and $a = (f-d(r-1)+1)/2$ is an integer. Notice that $d(r-1)+1 \le f < d((r-1)+2)+1$, so that $\lfloor \frac{f-a}{d} \rfloor = r$. Now, for this choice of $a$, we have $a = f-d(r-1) - a + 1 = (f - d\lfloor\frac{f-a}{d}\rfloor -a + 1) + d$. But since $r \geq 2$, $d(r-1)+a - 1 \geq 0$ and $f-d(r-1)-a+1 \le f$. Hence, $-\e_{f-(dr+a-1)} =  -\e_{f-(dr+a-1)+d} = -\e_{f-(d(r-1)+a-1)}$, and we obtain our desired contradiction. The proof of Case 1 is now complete.  

\emph{Case 2:} Suppose that the length of $\underline{w}_2$ is greater than or equal to the length of $\underline{w}_3$. Note that they cannot be equal, for otherwise $\underline{w}_3\cdot \underline{w}_2^* = \underline{w}_2\cdot \underline{w}_3$ would imply $\underline{w}_3 = \underline{w}_2$, and hence $\underline{w}_3^* = \underline{w}_3$, a contradiction. So then, we may write $\underline{w}_3 = \e_1\cdots \e_d$ and $\underline{w}_2 = \e_1\cdots \e_d\e_{d+1}\cdots \e_{d+f}$, for some $d,f>0$. As before, $d \neq f$. Hence, our equation reads 

\begin{equation} 
\e_1\cdots \e_d(-\e_{d+f})\cdots (-\e_{d+1})(-\e_d)\cdots (-\e_1) = \e_1\cdots \e_d\e_{d+1}\cdots \e_{d+f}\e_1\cdots \e_d.
\end{equation} 

\noindent Comparing the last $d$-terms on both sides of this equation, we conclude that $\e_a = -\e_{d-a+1}$, for $1 \le a \le d$. Now, $d$ is odd, so $(d+1)/2$ is an integer $\le d$. Therefore, for $a = (d+1)/2$ we have $\e_{(d+1)/2} = -\e_{d-(d+1)/2 +1} = -\e_{(2d - d - 1 + 2)/2} = -\e_{(d+1)/2}$, a contradiction.
\end{proofEven}   

 The remaining case left to consider is when $Q$ has $n = 2m+1$ vertices, $m \geq 1$. Again, we assume $\Aut (Q) = 1$ and $|i| < |j|$. If $i,j \le 0$ or $i,j \geq 0$ and $A_{i,j} \cong A_{-i,-j}$, then inspection of the long path in Figure \ref{f.sepform} implies that $\underline{w}(\ell) = -\underline{w}(\operatorname{pred}(-\ell ))$ for either $-m \le \ell \le 0$ or $0 \le \ell \le m-1$. In either case we conclude $\underline{w}^* = \underline{w}$, contradicting the triviality of $\Aut (Q)$. So we can assume $i<0 $ and $0\le j$ in addition to $|i| < |j|$. Then we can decompose $\underline{w}$ into $\underline{w} = \underline{w}_1\cdot \underline{w}_2 \cdot \underline{w}_3 \cdot \underline{w}_4 \cdot \underline{w}_5$ as in Figure \ref{f.wdef}. Under the assumptions that $A_{i,j} \cong A_{-i,-j}$ and $n$ is odd, the length of $\underline{w}_3$ must be even, and $\underline{w}_3\cdot \underline{w}_2^* = \underline{w}_2 \cdot \underline{w}_3$.

With this modified setup, we can now finish the proof of Lemma \ref{l.evenodd}:

%As before, we assume SAY YOUR STUFF ON VERTICES HERE <-----don't remember what I needed to say here, the stuff I already wrote seems sufficient

%\begin{lemma}\label{l.odd}
%Let $n = 2m+1$, with $i$, $j$, and $\underline{w}_{\ell}$ as defined above, for $1 \le \ell \le 5$. Then $A(i+j)\not\cong A(-i-j)$.
%\end{lemma}

\begin{proofOdd}
 \emph{Case 1:} Say that the length of $\underline{w}_3$ is greater than the length of $\underline{w}_2$. Write $\underline{w}_2 = \e_1\cdots \e_d$ and $\underline{w}_3 = \e_1\cdots \e_d\e_{d+1}\cdots \e_{d+f}$. Again, we find that equation (2) holds, and so $\e_{d+a} = \e_a$ for $1 \le a \le f$ and $\e_{f+b} = -\e_{d-b+1}$ for $1\le b \le d$. 

 \emph{Sub-Case 1(a):} Suppose that $f\le d$. If $d$ and $f$ are both odd, then $a= (f+1)/2$ is an integer $\le f$. Since $d+a\geq f+a > f$, we have $\e_a = \e_{d+a} = \e_{f+(d+a-f)} = -\e_{d-(d+a-f)+1} = -\e_{f-a+1}$. But for this choice of $a$, $a = f-a+1$, and we arrive at a contradiction. So we may assume that $d$ and $f$ are both even. Applying the same logic, we find that for all $a \le f$, we have a sequence of equalities: $\e_a = \e_{d+a} = -\e_{f-a+1} = -\e_{d+f-a+1} = \e_a$. Since $f$ and $d$ are both even, $\{ a,d+a\} \cap \{f-a+1, d+f-a+1\} = \emptyset$. The equalities $\e_a = -\e_{d+f - a + 1}$ and $\e_{d+a} = -\e_{f-a+1}$ are equivalent to the statement that $\underline{w}_3^* = \underline{w}_3$, which contradicts our hypotheses. Hence, the $f\le d$ case is complete. 

 \emph{Sub-Case 1(b):} Suppose that $f>d$. For any $a \le f$, define $q = q(a)$ to be the largest non-negative integer such that $dq + a \le f$. Then $\e_a = \ldots = \e_{dq+a} = \e_{d(q+1)+a} = -\e_{f-(dq+a-1)}$. If $f$ and $d$ are odd, then define $r$ to be the largest positive integer such that $dr +1 \le f$. If $r$ is even, then $a = (f-dr+1)/2$ is an integer $\le f$ which satisfies $\lfloor \frac{f-a}{d} \rfloor = r$ and $i = f-dr+a-1$. This implies $\e_a = -\e_a$ as before. Otherwise $r$ is odd. Then $a = (f-d(r-1)+1)/2$ satisfies $\lfloor\frac{f-a}{d}\rfloor = r$, $f-d(r-1)-a+1 \le f$, and $a=(f - dr -a + 1) + d$, which implies $\e_a = -\e_{f-d(r-1) -a +1} = -\e_a$ as before. It remains to consider the case that $f$ and $d$ are even. Then we have $\e_a = \ldots = \e_{dq+a} = \e_{d(q+1)+a} = -\e_{f-(dq+a-1)} = \ldots = -\e_{f-a+1} = -\e_{d+f-a+1}$ for all $1 \le a \le f$, so $\e_a = -\e_{d+f - (a-1)}$. Finally, for each $1 \le a \le d$, there is a unique $f< s \le f+d$ with $s \equiv a$ $(\operatorname{mod}$ $d)$. But then, $s = d(q+1) + a$, and the equality $\e_{d(q+1)+a} = -\e_{f-(dq+a-1)}$ tells us that $\e_b = -\e_{d+f-(b-1)}$ for all $1 \le b \le f+d$. This implies $\underline{w}_3 = \underline{w}_3^*$, contrary to our assumption that $\Aut (Q) = 1$. Hence, the $f>d$ case is complete.  

\emph{Case 2:} Say that the length of $\underline{w}_3$ is less than or equal to the length of $\underline{w}_2$. As before, $\ell (\underline{w}_3) \neq \ell (\underline{w}_2)$, so we may assume $\ell (\underline{w}_3) < \ell (\underline{w}_2)$. Write $\underline{w}_3 = \e_1\cdots \e_d$ and $\underline{w}_2 = \e_1\cdots \e_d\e_{d+1}\cdots \e_f$. As before, equation (3) holds. If $d$ is odd, repeat the same argument given for the even-vertices case. Otherwise $d$ is even, and so the equation $\e_a = -\e_{d-a+1}$ for $1 \le a \le d$ tells us $\underline{w}_3^* = \underline{w}_3$, contrary to our hypotheses. 
 \end{proofOdd}
 
 \noindent Lemma \ref{l.evenodd} immediately implies the following: 
 
 \begin{proposition}\label{p.seporbit} 
 Let $A \subset B$ be a connected maximal subalgebra of separable type, where $B = kQ$ and $Q$ is a type $\mathbb{A}$ Dynkin quiver. Then $\Iso (A,B) = \Aut_k(B)\cdot A$.
 \end{proposition}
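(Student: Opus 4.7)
The plan is to reduce Proposition \ref{p.seporbit} to Lemma \ref{l.evenodd} through a sequence of normalizations. Since $A$ is a connected maximal subalgebra of separable type, Theorem \ref{t.basic} produces a $\phi \in \Inn^*(B)$ with $\phi(A) = A_{i,j}$ for some distinct pair $i \neq j$ in $Q_0$. Because conjugation by $\phi$ preserves both the isoclass and the $\Aut_k(B)$-orbit, it suffices to show that $\Iso(A_{i,j}, B) = \Aut_k(B)\cdot A_{i,j}$ for every admissible pair.

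First I would read off the Ext quiver of $A_{i,j}$ from Proposition \ref{p.seppres}: it has the shape of Figure \ref{f.sepform}, with a distinguished (generically trivalent) vertex $v_{i+j}$ at which a left path, a right path, and a ``bypass'' meet. Reading off the lengths of these paths and their edge orientations, a direct inspection shows that among all subalgebras of the form $A_{i',j'}$, only the pair $\{i', j'\} = \{-i, -j\}$ can yield a quiver isomorphic to that of $A_{i,j}$. Since Theorem \ref{t.basic} implies that every separable-type isoclass in $\msa(Q)$ is a union of $\Inn^*(B)$-orbits of algebras $A_{i',j'}$, this yields the containment $\Iso(A_{i,j}, B) \subset \Aut_k(B)\cdot A_{i,j} \cup \Aut_k(B)\cdot A_{-i,-j}$.

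Next I would dispose of the easy cases in which the two candidate orbits already coincide. If $i = -j$, there is nothing to prove. If $\Aut(Q) = C_2$, the non-identity element lifts to an automorphism of $B$ exchanging $A_{i,j}$ and $A_{-i,-j}$, so again they share an $\Aut_k(B)$-orbit. After discarding these cases I may assume $\Aut(Q) = 1$, $i \neq -j$, and (after swapping $i$ and $j$ if needed) $|i| < |j|$. A short side argument using the long path through $v_{i+j}$ rules out the subcase where $i$ and $j$ have the same sign: in that situation an isomorphism $A_{i,j} \cong A_{-i,-j}$ would pair up the corresponding edges of the long path and force $\underline{w}^* = \underline{w}$, contradicting the triviality of $\Aut(Q)$. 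This reduces us precisely to the hypotheses of Lemma \ref{l.evenodd} (with the odd and even parities of $n$ handled separately, as in the statement of the lemma), which then directly asserts $A_{i,j} \not\cong A_{-i,-j}$.

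The main obstacle has already been absorbed into Lemma \ref{l.evenodd}: the entire combinatorial difficulty is the verification that no binary word satisfies the self-similar equation $\underline{w}_3 \cdot \underline{w}_2^* = \underline{w}_2 \cdot \underline{w}_3$ while simultaneously enjoying $\underline{w}_3 \neq \underline{w}_3^*$ on a subword of even length. Granting that lemma, the proof of Proposition \ref{p.seporbit} is a matter of bookkeeping the reductions above, with the only subtlety being the careful identification, via Proposition \ref{p.seppres}, of which pairs $(i', j')$ can possibly share a quiver with $(i,j)$.
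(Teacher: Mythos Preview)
Your proposal is correct and mirrors the paper's own argument almost step for step: the paper carries out exactly the same chain of reductions (normalize to $A_{i,j}$, use Proposition~\ref{p.seppres} and the shape in Figure~\ref{f.sepform} to get $\Iso(A_{i,j},B)\subset \Aut_k(B)\cdot A_{i,j}\cup \Aut_k(B)\cdot A_{-i,-j}$, discard $i=-j$ and $\Aut(Q)=C_2$, eliminate the same-sign case, then invoke Lemma~\ref{l.evenodd}). The only cosmetic difference is that for $n$ even the paper dispatches the same-sign case by looking at the single edge between $v_{-1}$ and $v_1$ rather than the whole long path, but this is a minor variation on the same idea.
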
 
 
 \noindent Finally, Propositions \ref{p.splitorbit} and \ref{p.seporbit} combine to yield Theorem \ref{t.main}. 
 
We end with an example to show that the connectedness hypothesis is necessary:
 
 \begin{example} 
 We note that the conclusion of Theorem \ref{t.main} is false if $A$ has a disconnected Ext quiver. For instance, suppose that $Q$ is the type $\mathbb{A}_4$ Dynkin quiver  
 
 \[
\vcenter{\hbox{ 
 \begin{tikzpicture}[point/.style={shape=circle,fill=black,scale=.3pt,outer sep=3pt},>=latex, baseline=-3,scale=2]  
\node[point,label={below:$v_{-2}$}] (1) at (0,0) {} ; 
\node[point,label={below:$v_{-1}$}] (2) at (1,0) {} ; 
\node[point,label={below:$v_1$}] (3) at (2,0) {} ;  
\node[point,label={below:$v_2$}] (4) at (3,0) {} ;
\path[->] (1) edge  (2)  
(3) edge (2) 
(3) edge (4) ;
\end{tikzpicture}
}}.
\]
Then $\Aut_k(B) = \Inn (B)$ and $\Iso (A_{-2}, B) = \Inn(B)\cdot A_{-2} \cup \Inn(B)\cdot A_1$. Of course, $A_{-2}$ and $A_1$ lie in different $\Inn(B)$-orbits, since their Jacobson radicals are distinct.
 \end{example}

%\noindent {\bf{Note:}} We can recast our theorem in the following way: if $B = kQ$, where $Q$ is a quiver whose underlying graph is a tree with maximum degree $2$, then isoclasses of $\msa (Q)$ coincide with $\Aut_k(B)$-orbits. The author does not know whether a similar statement holds for trees with maximum degree $3$. 

\end{document}